\def\uno{\mathbf{1}}
\definecolor{ao(english)}{rgb}{0.0, 0.5, 0.0}
\definecolor{brickred}{rgb}{0.8, 0.25, 0.33}
\definecolor{burntorange}{rgb}{0.8, 0.33, 0.0}
\definecolor{beaver}{rgb}{0.62, 0.51, 0.44}
\definecolor{brown(traditional)}{rgb}{0.59, 0.29, 0.0}
\definecolor{ao(english)}{rgb}{0.0, 0.5, 0.0}
\definecolor{verde}{rgb}{0.12, 0.8, 0.17}
\def\rouge{\color{red}}
\newcommand{\rac}{\mathbb Q}
\theoremstyle{plain}
\newtheorem{teo}{Theorem}[section]
\newtheorem*{teo-non}{Theorem}
\newtheorem{prop}[teo]{Proposition}
\newtheorem{coro}[teo]{Corollary}
\newtheorem{lema}[teo]{Lemma}
\theoremstyle{definition}
\newtheorem{defi}[teo]{Definition}
\theoremstyle{remark}
\newtheorem{ejem}[teo]{Example}
\newtheorem{rem}[teo]{Remark}
\def\endpf{~\leaders\hbox to 1em{\hss\  \hss}\hfill~\raisebox{.5ex}{\framebox[1ex]{}}\smallskip\par}
\author{Nadia Romero\footnote{\texttt{nadia.romero@ugto.mx}}\\ 
\begin{small}
Departamento de Matem\'aticas,
\end{small}\\
\begin{small}
Universidad de Guanajuato.
\end{small}
}
\title{Hochschild cohomology for functors on linear symmetric monoidal categories}
\date{ }
\begin{document}

\maketitle
\begin{abstract}
Let $R$ be a commutative ring with unit. We develop a Hochschild cohomology theory in the category $\mathcal{F}$ of linear functors defined from an essentially small symmetric monoidal category enriched in $R$-Mod, to $R$-Mod. The category $\mathcal{F}$ is known to be symmetric monoidal too, so one can consider monoids in $\mathcal{F}$ and modules over these monoids, which allows for the possibility of a Hochschild cohomology theory. The emphasis of the article is in considering natural \textit{hom} constructions appearing  in this context. These \textit{homs}, together with the abelian structure of $\mathcal{F}$ lead to nice definitions and provide effective tools to prove the main properties and results of the classical Hochschild cohomology theory. 

\noindent{\textbf{Keywords}: Hochschild cohomology, enriched monoidal categories, linear functors.}

\noindent{\textbf{AMS MSC (2020)}: 18M05, 18D20, 18G90.}
\end{abstract}

\section{Introduction}

We consider  an essentially small symmetric monoidal category, $\mathcal{X}$, enriched in $R$-Mod with $R$ a commutative ring with unit, and then $R$-linear functors from $\mathcal{X}$ to $R$-Mod. This category of functors, denoted by $\mathcal{F}$, is an abelian, symmetric monoidal, closed category, via Day's convolution (see, for example, Section 3.3 of \cite{libro}). Given a monoid $A$ in $\mathcal{F}$, we have then a category a $A$-modules, $A$-Mod. We refer the reader to \cite{maclane} or \cite{libro} for the generalities on monoids and modules over them. The main example we have in mind when considering these hypothesis is $\mathcal{X}$ as the \textit{biset category} and $\mathcal{F}$ as the category of \textit{biset functors} (see, for example, \cite{biset}). Monoids in this case are called Green biset functors and have been extensively studied in the last years, in particular, in \cite{centros} the commutant and the center of a Green biset functor are studied. As we will see, the commutant of a Green biset functor is the Hochschild cohomology functor of degree zero. So, it is a natural question to ask for a Hochschild cohomology theory of Green biset functors. The purpose of the paper is to develop the main results and properties of this theory for monoids in $\mathcal{F}$, in order to apply them to biset functors in a forthcoming paper in collaboration with Serge Bouc.

Even though there are recent articles about Hochschild cohomology in monoidal categories (see for example \cite{italianos} and \cite{chico1}), none of them is suited for a direct application to our case.  So, we take a different and new approach by making use of the \textit{internal hom} functor in $\mathcal{F}$, defined in Section 3. The advantage of working with functors from $\mathcal{X}$ to $R$-Mod is that we can give an explicit construction of the internal hom functor (see chapters 2 and 3 of \cite{libro} for the abstract definition), via the Yoneda-Dress construction, introduced in Section 8.2 of \cite{biset}. With this, the definition of the Hochschild cochain complex of $A$ appears in a natural way in the category $\mathcal{F}$. In particular, we can give an explicit description of its arrows, in terms of these \textit{homs}, which allows for a better understanding of the Hochschild cohomology functors, denoted by $\mathcal{H}H^i(A,\, M)$, for $M$ an $A$-bimodule and $i$ a natural number.

The article is inspired in  Loday's  presentation of the classical Hochschild cohomo\-logy theory (see \cite{loday}). 
So, in  Section 4 we introduce the bar resolution of a monoid $A$ in $\mathcal{F}$ and we define the Hochschild cochain complex of $A$. In Section 5 we deal with the important example of a separable monoid. The main result is Theorem \ref{teosep}, in which we show that if $A$ is a separable monoid in $\mathcal{F}$, then, for any $A$-bimodule $M$, the  functors $\mathcal{H}H^i(A,\, M)$ are zero for very $i>0$ and, conversely, if $A$ is a monoid in $\mathcal{F}$ such that the first Hochschild cohomology functor $\mathcal{H}H^1(A,\, M)$ is zero for any $A$-bimodule $M$, then $A$ is separable. As a corollary, we obtain that the Hochschild cohomology functors, for $i> 0$, of the Burnside biset functor, $RB$, as well as those of the biset functor of rational representations, $kR_{\rac}$, over a field $k$ of characteristic $0$, are all zero.

Finally, in Section 6, we describe the Hochschild cohomology functors of degrees 0, 1 and 2. As in the classical case, the degree 1 can be described in terms of \textit{derivations} and the degree 2 is described in terms of extensions. Also, following \cite{gers}, for a monoid $A$ in $\mathcal{F}$, we endow the coproduct $\bigoplus_{i\in \mathbb{N}}\mathcal{H}H^i(A,\, A)$ with a structure of a graded monoid and call it the  \textit{Hochschild cohomology monoid}.  We try too keep the proofs of the statements in this last section as brief as possible, since there are many straightforward calculations that work as in the classical case.

\section{Preliminaries}
\label{sec-prem}

For an object $x$ in a category $\mathcal{C}$, we denote the identity morphism of $x$ simply as $x$.

In what follows $R$ is a commutative ring with identity, $R$-Mod denotes the category of $R$-modules and $(\mathcal{X},\, \diamond,\, \uno,\,\alpha,\, \lambda,\, \rho,\, s)$ is an essentially small symmetric monoidal category, with symmetry $s$ and enriched in $R$-Mod, in the sense of Definition 3.1.51 of \cite{libro}. In particular  the functor $\,\_\,\diamond \,\_\, : \mathcal{X}\times \mathcal{X}\rightarrow \mathcal{X}$ is $R$-bilinear. The category of $R$-linear functors from $\mathcal{X}$ to $R$-Mod is denoted by $\mathcal{F}$. Recall that, since $\mathcal{X}$ is preadditive and $R$-Mod is bicomplete and abelian, then $\mathcal{F}$ is a bicomplete and abelian category and all limits and colimits are computed pointwise (see, for example, Chapter 2 of Vol. 1 and Chapter 1 of Vol. 2 of \cite{borc}). Moreover,  $\mathcal{F}$ is a symmetric monoidal closed category with identity given by $I=\mathcal{X}(\uno,\,\_\,)$ (see, for example, Section 3.3 in \cite{libro}). We denote the tensor product of two objects $M$ and $N$ in $\mathcal{F}$ as $M\otimes N$. With this, the complete notation for $\mathcal{F}$ is $(\mathcal{F},\, \otimes,\, I,\, \alpha^{\mathcal{F}},\, \lambda^{\mathcal{F}},\, \rho^{\mathcal{F}},\, S)$. As it is customary, we will avoid, whenever is possible, the parenthesis of association in tensor product of several objects in $\mathcal{F}$.

Given a monoid $A$ in $\mathcal{F}$, we denote its product by $\mu:A\otimes A\rightarrow A$, in case of confusion we may write $\mu_A$ instead of $\mu$. We recall that we have a morphism $e:I\rightarrow A$, or $e_A$, and the following commuting diagrams,
\[
1.\quad \xymatrix{
A\otimes (A\otimes A)\ar[dd]^-{\cong}_-{\alpha^{\mathcal{F}}}\ar[r]^-{A\otimes \mu} & A\otimes A\ar[rd]^-{\mu}\\
 & & A\\
(A\otimes A)\otimes A\ar[r]^-{\mu\otimes A} & A\otimes A\ar[ru]_-{\mu}
}\quad
2.\quad \xymatrix{
I\otimes A\ar[r]^-{e\otimes A}\ar[rd]^-{\cong}_-{\lambda^{\mathcal{F}}} & A\otimes A\ar[d]^-{\mu} & A\otimes I\ar[l]_-{A\otimes e}\ar[ld]_-{\cong}^-{\rho^{\mathcal{F}}}\\
& A & 
} 
\]
in $\mathcal{F}$.

In what follows $A$ and $C$ are monoids in $\mathcal{F}$ (the letter $B$ will exclusively be used to denote the  Burnside functor). A morphism of monoids from $A$ to $C$ is an arrow $f:A\rightarrow C$ in $\mathcal{F}$ such that the following diagram commutes
\[
\xymatrix{
A\otimes A\ar[r]^-{\mu_A}\ar[d]_-{f\otimes f}&A\ar[d]^-{f}\\
C\otimes C\ar[r]_-{\mu_C}&C
}
\]
and $f\circ e_A=e_C$.

The tensor product $A\otimes C$, of $A$ and $C$, is a monoid with product given by
\[
\xymatrix{
(A\otimes C)\otimes (A\otimes C)\ar[rr]^-{A\otimes S_{2,\, 3}\otimes C}&& (A\otimes A)\otimes (C\otimes C)\ar[rr]^-{\mu_A\otimes \mu_C}&&A\otimes C,
}
\]
where $S_{2,\, 3}$ is the symmetry $C\otimes A\rightarrow A\otimes C$. When needed, as it was in this case, we will denote the symmetry with subindex. The identity arrow is given by $e_A\otimes e_C$, more precisely $e_{A\otimes C}$ is equal to the composition 
\[
\xymatrix{
I\ar[rr]^-{(\lambda^{\mathcal{F}})^{-1}} && I\otimes I\ar[rr]^-{e_A\otimes e_C} && A\otimes C.
}
\]

A left $A$-module is an object $M$ in $\mathcal{F}$ together with an arrow $\nu:A\otimes M\rightarrow M$ such that the following diagrams commute
\[
3.\quad \xymatrix{
A\otimes (A\otimes M)\ar[dd]^-{\cong}_-{\alpha^{\mathcal{F}}}\ar[r]^-{A\otimes \nu} & A\otimes M\ar[rd]^-{\nu}\\
 & & M\\
(A\otimes A)\otimes M\ar[r]^-{\mu\otimes M} & A\otimes M\ar[ru]_-{\nu}
}\quad
4.\quad \xymatrix{
I\otimes M\ar[r]^-{e\otimes M}\ar[rd]^-{\cong}_-{\lambda^{\mathcal{F}}} & A\otimes M\ar[d]^-{\nu} \\
& M & 
} 
\]
in $\mathcal{F}$. Right $A$-modules are defined in an analogous way. Morphisms of $A$-modules are defined in the obvious way and the subcategory of $\mathcal{F}$ of left $A$-modules is denoted by $A$-Mod.

We denote the opposite of $A$ in $\mathcal{F}$ as $A^{op}$. That is, $A^{op}$ denotes the object $A$ with opposite monoidal structure, $\mu \circ S:A\otimes A\rightarrow A$ and $e_{A^{op}}=e_A$. The monoid $A$ is called \textit{commutative} if it is equal to its opposite, i.e. $\mu\circ S=\mu$.

Recall that $C^{op}$-modules identify with right $C$-modules and thus that $(A,\, C)$-bimodules identify with $A\otimes C^{op}$-modules in the following way. 

\begin{rem}
\label{bimodop}
Let $M$ be an $(A,\, C)$-bimodule, with actions $l_A:A\otimes M\rightarrow M$ and $r_C:M\otimes C\rightarrow M$ such that $r_C\circ (l_A\otimes C)=l_A\circ (A\otimes r_C)$.  Then the action  of $A\otimes C^{op}$ on $M$ is given by
\[
\xymatrix{
(A\otimes C^{op})\otimes M\ar[rr]^-{A\otimes S_{2,\, 3}} && A\otimes M\otimes C\ar[rr]^-{l_A\otimes C} && M\otimes C\ar[r]^-{r_C}& M. 
}
\]
The symmetry is actually $S_{2,\, 3}:C\otimes M\rightarrow M\otimes C$, but in $A\otimes C^{op}$ we consider $C$ with the opposite product, as above. On the other direction, if $M$ is an $A\otimes C^{op}$-module we obtain the actions of $A$ and $C$ by composing with $A\otimes e_{C^{op}}:A\otimes I\rightarrow A\otimes C^{op}$ and $e_A\otimes C^{op}:I\otimes C^{op}\rightarrow A\otimes C^{op}$.
\end{rem}

We denote the monoid $A\otimes A^{op}$ by $A^e$. We may indistinctly write $A^e$-module, $(A,\, A)$-bimodule or even $A$-bimodule.

\begin{rem}
\label{mu-uni}
The objects $A$ and $A\otimes A$ have natural structures of $(A,\, A)$-bimodules. Furthermore, $\mu_A$ is a bimodule homomorphism. On the other hand, with the monoid structure defined before for $A\otimes A$, the arrow $\mu_A$ is not, in general, a morphism of monoids. Nevertheless, it is easy to see that $\mu_A$ is a unitary morphism, that is $e_A=\mu_A\circ e_{A\otimes A}$. 
\end{rem}

\begin{ejem}
Let $\mathcal{C}$ be the biset category for the class of all finite groups over the ring $R$, as defined in \cite{biset}, 
and let $\mathcal{F}_{\mathcal{C},\, R}$ be the category of biset functors, that is, $R$-linear functors from $\mathcal{C}$ to $R$-Mod. Then $\mathcal{C}$, with the direct product of groups and the trivial group, satisfies the hypothesis of $\mathcal{X}$ and so $\mathcal{F}_{
\mathcal{C}, R}$ those of $\mathcal{F}$. In this case, the functor $I$ is the Burnside functor $RB$.
\end{ejem}

\section{Hom functors}
\label{homfun}
We begin by recalling the construction of the \textit{internal hom} object in $\mathcal{F}$. The general definition of this object can be found in Section 3 of \cite{libro} but, since we are working with functors on $R$-Mod, we follow the lines of Chapter 8 of \cite{biset}, to give an explicit construction.

For each object $x\in \mathcal{X}$, we have an $R$-linear functor $\mathrm{p}_x:\mathcal{X}\rightarrow \mathcal{X}$, given by the monoidal structure of $\mathcal{X}$, that is, it sends an object $y$ to $y\diamond x$ and an arrow $\varphi$ to $\varphi\diamond x$. This  in turn allows us to define the endofunctor $\mathrm{P}_x:\mathcal{F}\rightarrow \mathcal{F}$ called the \textit{Yoneda-Dress construction}. In an object $F$, it is defined as $F_x:=F\circ\mathrm{p}_x$ and in an arrow $f:M\rightarrow N$ as $(P_x(f))_y=f_{y\diamond x}=:f_x(y)$, for $y$ an object of $\mathcal{X}$.

The following lemma is a generalization of Lemma 8.2.4 in \cite{biset}. Its proof is straightforward. 

\begin{lema}
\label{824}
Let $x$, $y$ and $x$ be objects in $\mathcal{X}$.
\begin{enumerate}
\item The functors $P_y\circ P_x$ and $P_{y\diamond x}$ are naturally isomorphic.
\item Given an arrow $\varphi$ in $\mathcal{X}(x,\, y)$, it induces a natural transformation $\mathrm{P}_{\varphi}:\mathrm{P}_x\rightarrow \mathrm{P}_y$. In a functor $F$, the arrow $\mathrm{P}_{\varphi,\, F}:F_x\rightarrow F_y$, denoted by   $F_{\varphi}$, is defined at an object $w$ in $\mathcal{X}$ as $F_{\varphi, w}=F(w\diamond \varphi)$. 
\item If $\phi\in \mathcal{X}(x,\, y)$ and $\psi\in \mathcal{X}(y,\, z)$, then $P_{\psi}\circ P_{\phi}=P_{\psi\circ \phi}$.
\item The correspondence $P_{\,\_\,}$ sending an object $x$ of $\mathcal{X}$ to $P_{x}$ and an arrow $\varphi$
 in $\mathcal{X}$ to $P_{\varphi}$ is an $R$-linear functor from $\mathcal{X}$ to the category $Fun_{R}(\mathcal{F},\, \mathcal{F})$ of $R$-linear endofunctors of $\mathcal{F}$.
\end{enumerate}
\end{lema}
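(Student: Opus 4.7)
The plan is to unwind every claim by evaluating both sides on a test functor $F$ and then on an object $w\in\mathcal{X}$, so that each equality reduces to a statement in the ambient category $\mathcal{X}$ (or in $R$-Mod after applying $F$). The key ingredients will be the bifunctoriality and $R$-bilinearity of $\diamond$, the functoriality of $F$, the associator $\alpha$ of $\mathcal{X}$, and, for naturality in the functor variable, the naturality of an arbitrary morphism $f\colon F\to G$ in $\mathcal{F}$.

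For part (1), I would fix $F\in\mathcal{F}$ and compute $(P_y\circ P_x)(F)(w)=F((w\diamond y)\diamond x)$ and $P_{y\diamond x}(F)(w)=F(w\diamond(y\diamond x))$. The associator of $\mathcal{X}$ provides an isomorphism $\alpha_{w,y,x}\colon(w\diamond y)\diamond x\to w\diamond(y\diamond x)$; applying $F$ gives an isomorphism natural in $w$ (by naturality of $\alpha$ and functoriality of $F$) and natural in $F$ (trivially). For part (2), I would first check that $P_{\varphi,F}\colon F_x\to F_y$ with components $F(w\diamond\varphi)$ is a morphism in $\mathcal{F}$: for an arrow $\eta\colon w\to w'$ in $\mathcal{X}$, the square built from $F(w\diamond\varphi)$, $F(w'\diamond\varphi)$, $F(\eta\diamond x)$, $F(\eta\diamond y)$ commutes because $\diamond$ is a bifunctor and $F$ is a functor, so $(\eta\diamond y)\circ(w\diamond\varphi)=(w'\diamond\varphi)\circ(\eta\diamond x)=\eta\diamond\varphi$. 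Then naturality of $P_\varphi$ in $F$ follows, for any $f\colon F\to G$, from naturality of $f$ at the morphism $w\diamond\varphi$.

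Part (3) is the cleanest: at $(F,w)$ we have $F(w\diamond\psi)\circ F(w\diamond\phi)=F\bigl((w\diamond\psi)\circ(w\diamond\phi)\bigr)=F(w\diamond(\psi\circ\phi))$, using functoriality of $F$ and of $w\diamond(-)$. This gives the strict equality $P_\psi\circ P_\phi=P_{\psi\circ\phi}$. Part (4) then assembles the pieces: preservation of composition is exactly (3); preservation of identities uses $F(w\diamond\mathrm{id}_x)=F(\mathrm{id}_{w\diamond x})=\mathrm{id}_{F_x(w)}$; and $R$-linearity, $P_{r\varphi+s\psi}=rP_\varphi+sP_\psi$ for $r,s\in R$, reduces at each $(F,w)$ to $F(w\diamond(r\varphi+s\psi))=F(r(w\diamond\varphi)+s(w\diamond\psi))=rF(w\diamond\varphi)+sF(w\diamond\psi)$, which uses $R$-bilinearity of $\diamond$ and $R$-linearity of $F$.

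Nothing in the argument presents a real obstacle; the only thing to be careful about is part (1), where the isomorphism is mediated by $\alpha$ and is not an equality, in contrast with the strict equality in (3). I would highlight this asymmetry explicitly so that later uses of $P_{y\diamond x}\cong P_y\circ P_x$ can appeal to the coherence theorem to avoid writing out associators.
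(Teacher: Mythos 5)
Your proof is correct and is precisely the routine verification the paper has in mind: the paper omits the argument entirely, declaring it straightforward, and your unwinding at a test pair $(F,w)$ using bifunctoriality and $R$-bilinearity of $\diamond$, functoriality of $F$, and naturality of $f\colon F\to G$ covers every claim, including the genuine asymmetry between the mere isomorphism in (1) and the strict equality in (3). The only cosmetic remark is that with the paper's convention $\alpha_{x,y,z}\colon x\diamond(y\diamond z)\to(x\diamond y)\diamond z$, the component of the isomorphism in part (1) is $F(\alpha_{w,y,x})\colon P_{y\diamond x}(F)(w)\to(P_y\circ P_x)(F)(w)$, i.e.\ your arrow points the other way, which is immaterial since it is invertible.
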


By point 4 of the previous lemma, for every object $F$ in $\mathcal{F}$, we have an $R$-linear  functor $F_{\,\_\,}:\mathcal{X}\rightarrow \mathcal{F}$.

\begin{rem}
For the biset category, the Yoneda-Dress construction $P_G$ at a group $G$, satisfies also that $P_G$ is a self-adjoint functor. We will not need this extra condition in what follows.
\end{rem}

\begin{defi}
\label{internal}
Let $M$ and $N$ be objects in $\mathcal{F}$, we denote by $\mathcal{H}(M,\, N)$ the object in $\mathcal{F}$ defined by the following composition of functors
\begin{displaymath}
\mathcal{H}(M,\, N)=\textrm{Hom}_{\mathcal{F}}(M,\, N_{\,\_\,}).
\end{displaymath}
The correspondence $\mathcal{H}(\,\_\,,\,\_\,):\mathcal{F}^{op}\times \mathcal{F}\rightarrow \mathcal{F}$ is a bilinear functor, called the internal hom functor. 
\end{defi}

\begin{lema}
\label{yonF}
We have a Yoneda Lemma for the internal hom functor. If $x\in \mathcal{X}$, then 
\begin{displaymath}
\mathcal{H}(\mathcal{X}(x,\,\_\,),\, F)\cong F_x.
\end{displaymath}
\end{lema}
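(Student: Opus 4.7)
The plan is to evaluate both sides of the claimed isomorphism at an arbitrary object $y \in \mathcal{X}$, produce an $R$-linear isomorphism between the resulting $R$-modules, and check that these assemble into a natural isomorphism in $\mathcal{F}$.

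By the very definition of $\mathcal{H}$, one has $\mathcal{H}(\mathcal{X}(x,\,\_\,),\, F)(y) = \mathrm{Hom}_{\mathcal{F}}(\mathcal{X}(x,\,\_\,),\, F_y)$, with $F_y = F\circ \mathrm{p}_y$. Since $F_y$ is an $R$-linear functor from $\mathcal{X}$ to $R$-Mod, the classical enriched Yoneda lemma yields a natural isomorphism
\[
\mathrm{Hom}_{\mathcal{F}}(\mathcal{X}(x,\,\_\,),\, F_y) \;\xrightarrow{\;\cong\;}\; F_y(x) \;=\; F(x\diamond y),\qquad \eta \longmapsto \eta_x(\mathrm{id}_x).
\]
The target object is $F_x(y) = F(y\diamond x)$, which differs from $F(x\diamond y)$ only by the order of the tensor factors, and the symmetry $s_{x,y}\colon x\diamond y \to y\diamond x$ of $\mathcal{X}$ gives an isomorphism $F(s_{x,y})\colon F(x\diamond y) \xrightarrow{\cong} F(y\diamond x) = F_x(y)$. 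Composing the two produces the candidate isomorphism
\[
\Phi_y\colon \mathrm{Hom}_{\mathcal{F}}(\mathcal{X}(x,\,\_\,),\, F_y) \longrightarrow F_x(y), \qquad \eta \longmapsto F(s_{x,y})\bigl(\eta_x(\mathrm{id}_x)\bigr),
\]
which is bijective and $R$-linear at every $y$.

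The remaining task is naturality of $\Phi$ in $y$. Given $\varphi\in \mathcal{X}(y,\,y')$, Lemma~\ref{824}(2) identifies the arrow $\mathcal{H}(\mathcal{X}(x,\,\_\,),\, F)(\varphi)$ with post-composition by $F_\varphi\colon F_y\to F_{y'}$, which at the object $x$ is $F(x\diamond\varphi)$. On the right-hand side, $F_x(\varphi) = F(\varphi\diamond x)$. Naturality of $\Phi$ therefore reduces to the equality
\[
F(\varphi\diamond x)\circ F(s_{x,y}) \;=\; F(s_{x,y'})\circ F(x\diamond\varphi),
\]
which is just $F$ applied to the naturality square of the symmetry $s$ in $\mathcal{X}$, and so holds.

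The only delicate point in this argument is the bookkeeping forced by the convention $\mathrm{p}_x(y) = y\diamond x$: the apparent mismatch between $F_y(x) = F(x\diamond y)$ and $F_x(y) = F(y\diamond x)$ must be absorbed by the symmetry, and it is precisely the naturality of $s$ that upgrades the pointwise bijection to a natural isomorphism. I do not expect any serious obstacle beyond keeping the order of tensor factors straight; everything else reduces to the classical Yoneda lemma and the axioms of a symmetric monoidal enrichment.
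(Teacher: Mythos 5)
Your argument is correct and follows essentially the same route as the paper: the classical (enriched) Yoneda lemma gives $\mathrm{Hom}_{\mathcal{F}}(\mathcal{X}(x,\,\_\,),\, F_y)\cong F_y(x)=F(x\diamond y)$, and the symmetry $s$ of $\mathcal{X}$ converts this to $F_x(y)=F(y\diamond x)$. The paper merely packages the same two steps as a composition of natural equivalences of functors ($\mathrm{Hom}_{\mathcal{F}}(\mathcal{X}(x,\,\_\,),\,\_\,)\cong ev_x$, precomposed with $F_{\,\_\,}$, then the symmetry), whereas you verify naturality in $y$ pointwise; the content is identical.
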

\begin{proof}
The known isomorphism of $R$-modules, $\mathrm{Hom}_{\mathcal{F}}(\mathcal{X}(x,\,\_\,),\, F)\leftrightarrow F(x)$ defines a natural equivalence between the functor $\mathrm{Hom}_{\mathcal{F}}(\mathcal{X}(x,\,\_\,),\,\_\,)$ and the evaluation functor, $ev_x$, going from $\mathcal{F}$ to $R$-Mod. By precomposition with the functor $F_{\,\_\,}$ we obtain an equivalence between $\mathcal{H}(\mathcal{X}(x,\,\_\,),\,F)$ and $ev_x\circ F_{\,\_\,}$ in $\mathcal{F}$. Finally, by composing with the symmetry $s$ of $\mathcal{X}$, we obtain an equivalence between $ev_x\circ F_{\,\_\,}$ and $F_x$.
\end{proof}

The rest of the section is devoted to defining a hom functor for modules over monoids in $\mathcal{F}$. We begin by stating the universal property of tensor products. It is easy to see that the proof of Proposition 8.4.2 in \cite{biset} holds in our case, so Remark 8.4.3 in \cite{biset} can also be generalized.

\begin{rem}
\label{flechas}
Let $M$, $N$ and $T$ be objects in $\mathcal{F}$. Consider the functors $(M,\, N)$ and $T$ from $\mathcal{X}\times \mathcal{X}$ to $R$-Mod given by sending $(x,\, y)$ to $M(x)\otimes_R N(y)$ and $T(x\diamond y)$ respectively, and in an obvious way in arrows. Then, there is a one-to-one correspondence between Hom$_\mathcal{F}(M\otimes N,\, T)$ and  $\mathrm{Hom}_{\mathcal{F}}((M,\, N), T)$ which is natural in every variable. 

In other words, a natural transformation from $M\otimes N$ to $T$ is given by a collection of $R$-bilinear maps
\begin{displaymath}
M(x)\times N(y)\rightarrow T(x\diamond y)
\end{displaymath}
satisfying obvious functoriality conditions. 
\end{rem}

Notice that this remark can be generalized to an arrow from a finite product $\bigotimes_{i=1}^nM_i$ to $T$.

Let $A$ be a monoid in $\mathcal{F}$. The Yoneda Lemma,
\begin{displaymath}
\mathrm{Hom}_{\mathcal{F}}(\mathcal{X}(\uno,\,\_\,),\, A)\cong A(\uno)
\end{displaymath}
together with the previous remark allows us to see the monoid $A$ as in the paragraphs following Definition 8.5.1 of \cite{biset}. 

Let $x$, $y$ and $z$ be objects in $\mathcal{X}$, consider $\alpha_{x,\,y,\,z}:x\diamond(y\diamond z)\rightarrow (x\diamond y)\diamond z$, $\lambda_x:\uno\diamond x\rightarrow x$ and $\rho_x:x\diamond \uno\rightarrow x$. Then, $A$ is a monoid in $\mathcal{F}$ if for every pair $x,\, y$ of objects in $\mathcal{X}$, there exist bilinear maps 
\begin{displaymath}
A(x)\times A(y)\rightarrow A(x\diamond y),
\end{displaymath}
denoted simply by $(a,\, b)\mapsto a\times b$, and an element $\varepsilon_A\in A(\uno)$ satisfying the following conditions. 
\begin{itemize}
\item Associativity. For $x$, $y$ and $z$, objects in $\mathcal{X}$, and for any $a\in A(x)$, $b\in A(y)$ and $c\in A(z)$,
\begin{displaymath}
(a\times b )\times c =A(\alpha_{x,\, y,\, z})(a\times (b\times c)).
\end{displaymath}
\item Identity element. For an object $x$ in $\mathcal{X}$ and any $a\in A(x)$,
\begin{displaymath}
a=A(\lambda_x)(\varepsilon_A\times a)=A(\rho_x)(a\times \varepsilon_A).
\end{displaymath}
\item Functoriality. Let $\varphi:x\rightarrow x'$ and $\psi: y\rightarrow y'$ be arrows in $\mathcal{X}$. Then for any $a\in A(x)$ and $b\in A(y)$,
\begin{displaymath}
A(\varphi\diamond \psi)(a\times b)=A(\varphi)(a)\times A(\psi)(b).
\end{displaymath}
\end{itemize}

Also, a module over $A$ can be seen as in the paragraphs following Definition 8.5.5 of \cite{biset}. That is, a left $A$-module is an object $M$ in $\mathcal{F}$ such that for every pair $x,\, y$ of objects in $\mathcal{X}$, there exist bilinear maps 
\begin{displaymath}
A(x)\times M(y)\rightarrow M(x\diamond y),
\end{displaymath}
which we continue to denote by $(a,\, m)\mapsto a\times m$, satisfying corresponding conditions of associativity, identity element (on the left only) and functoriality.  In a similar fashion we can define right modules and bimodules.

Morphisms of $A$-modules can also be rewritten in these terms. Let $M$ and $N$ be $A$-modules, a morphism of $A$-modules from $M$ to $N$ is an arrow $f:M\rightarrow N$ in $\mathcal{F}$ such that for any objects $x$ and $y$ in $\mathcal{X}$, we have $f_{x\diamond y}(a\times m)=a\times f_y(m)$ for all $a\in A(x)$ and $m\in M(y)$. 

The proof of the following lemma is straightforward.

\begin{lema}
\label{yonA1}
Let $A$ be a monoid in $\mathcal{F}$ and $M$ be an $A$-module, then we have an isomorphism of $R$-modules,
\begin{displaymath}
\mathrm{Hom}_{A-\mathrm{Mod}}(A,\,M)\cong M(\uno),
\end{displaymath}
given by sending a morphism of $A$-modules, $f$, to $f_{\uno}(\varepsilon_A)$.
\end{lema}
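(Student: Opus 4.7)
The plan is to prove the isomorphism by constructing an explicit inverse. Define $\Phi\colon \mathrm{Hom}_{A\text{-}\mathrm{Mod}}(A,M) \to M(\uno)$ by $\Phi(f) = f_{\uno}(\varepsilon_A)$; this is obviously $R$-linear. The key formula, which will yield both injectivity of $\Phi$ and the construction of its inverse, is
\[
f_x(a) = M(\rho_x)(a \times m), \qquad m := f_{\uno}(\varepsilon_A),
\]
for every object $x \in \mathcal{X}$ and every $a \in A(x)$. To derive it, I would start from the identity element axiom for $A$, namely $a = A(\rho_x)(a \times \varepsilon_A)$, apply $f_x$, then use naturality of $f$ (giving $f_x \circ A(\rho_x) = M(\rho_x) \circ f_{x\diamond \uno}$) and finally the description of $A$-module morphisms recalled just before the lemma (giving $f_{x\diamond\uno}(a\times \varepsilon_A) = a \times f_{\uno}(\varepsilon_A) = a\times m$). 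This proves $\Phi$ is injective, since $f$ is completely determined by $m$.

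For surjectivity, given $m \in M(\uno)$ I would define $g\colon A \to M$ component-wise by $g_x(a) = M(\rho_x)(a\times m)$, and then verify three things. First, naturality of $g$ in $x$: for $\varphi\colon x\to y$ in $\mathcal{X}$, both $g_y \circ A(\varphi)$ and $M(\varphi)\circ g_x$, expanded using the functoriality axiom of the action and the functoriality of $M$, reduce to applying $M$ to the two sides of the naturality square $\rho_y \circ (\varphi \diamond \uno) = \varphi \circ \rho_x$ of $\rho$, evaluated at $a \times m$. Second, compatibility with the $A$-action: for $b \in A(z)$ and $a \in A(x)$, expanding $g_{z\diamond x}(b\times a)$ with the associativity axiom produces $M(\rho_{z\diamond x}\circ \alpha_{z,x,\uno})(b\times(a\times m))$, while $b\times g_x(a)$ equals $M(z\diamond \rho_x)(b\times(a\times m))$ by functoriality of the action; the two match by Kelly's coherence identity $\rho_{z\diamond x}\circ \alpha_{z,x,\uno} = z\diamond \rho_x$, which is a consequence of the triangle and pentagon axioms. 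Third, $\Phi(g) = m$: this is $g_{\uno}(\varepsilon_A) = M(\rho_{\uno})(\varepsilon_A \times m)$, which equals $m$ by the identity-element axiom for the $A$-module $M$ together with the coherence equality $\rho_{\uno} = \lambda_{\uno}$.

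The main obstacle is not conceptual but bookkeeping: the verifications of naturality and $A$-linearity of $g$ both hinge on two well-known coherence identities of a symmetric monoidal category ($\lambda_{\uno}=\rho_{\uno}$ and $\rho_{y\diamond x}\circ \alpha_{y,x,\uno} = y\diamond \rho_x$), which here have to be invoked at the level of $\mathcal{X}$ and then transported through $M$. Once these are in hand, the two constructions $f\mapsto f_{\uno}(\varepsilon_A)$ and $m\mapsto g$ are mutually inverse essentially by inspection, and the $R$-linearity of both is immediate from the $R$-bilinearity of the action maps $A(x)\times M(\uno)\to M(x\diamond \uno)$.
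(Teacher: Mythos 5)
Your argument is correct and is precisely the ``straightforward'' verification the paper omits (it states the lemma with no proof beyond declaring it straightforward): the key identity $f_x(a)=M(\rho_x)(a\times f_{\uno}(\varepsilon_A))$ gives injectivity, and the inverse $m\mapsto g$ with $g_x(a)=M(\rho_x)(a\times m)$ is checked correctly, including the two coherence identities $\lambda_{\uno}=\rho_{\uno}$ and $\rho_{z\diamond x}\circ\alpha_{z,x,\uno}=z\diamond\rho_x$ needed for the unit and associativity steps. Nothing is missing.
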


\begin{lema}
\label{restric}
Let $x$ and $y$ be objects in $\mathcal{X}$ and $\alpha:x\rightarrow y$ be an arrow. Consider $\mathrm{P}_x$ and $\mathrm{P}_{\alpha}$ as defined at the beginning of the section. Then $\mathrm{P}_{\,\_\,}$ defines a functor from $\mathcal{X}$ to the category of $R$-linear endofunctors of $A$-$\mathrm{Mod}$.
\end{lema}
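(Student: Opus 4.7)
The plan is to upgrade every piece of the Yoneda--Dress construction from $\mathcal{F}$ to $A$-$\mathrm{Mod}$: first endow each $M_x=M\circ\mathrm{p}_x$ with an $A$-module structure, then check that an $A$-linear morphism $f$ still induces an $A$-linear morphism $f_x$, and finally check that for $\alpha:x\to y$ the natural transformation $\mathrm{P}_\alpha$ is componentwise $A$-linear. Functoriality on $\mathcal{X}$ itself, i.e.\ $\mathrm{P}_{\psi\circ\varphi}=\mathrm{P}_\psi\circ \mathrm{P}_\varphi$, is already point 3 of Lemma \ref{824}, so once these three items are established the lemma will follow.

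For the module structure I would work with the explicit description of $A$-modules given via Remark \ref{flechas}. Given $a\in A(w)$ and $m\in M_x(z)=M(z\diamond x)$, the natural candidate for the action is $a\times_{M_x}m:=M(\alpha_{w,z,x})(a\times m)\in M((w\diamond z)\diamond x)=M_x(w\diamond z)$. Associativity of this action reduces to the pentagon axiom for $\alpha$ combined with the associativity of $A$ on $M$; the identity axiom is immediate from $\lambda_x\circ\alpha_{\uno,z,x}=(\lambda_z)\diamond x$ (a standard coherence consequence), and functoriality in $w$ and $z$ follows from naturality of the associator. For an $A$-morphism $f:M\to N$, the verification $(f_x)_{w\diamond z}(a\times_{M_x}m)=a\times_{N_x}(f_x)_z(m)$ is a one-line computation using naturality of $f$ to move the associator past $f$, followed by the $A$-linearity of $f_{z\diamond x}$.

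The less routine step, and where I expect the bookkeeping to be the main obstacle, is showing that for $\alpha\in \mathcal{X}(x,y)$ the component $M_\alpha:M_x\to M_y$, whose value at $w$ is $M(w\diamond \alpha)$ by Lemma \ref{824}(2), is an $A$-module morphism. Here I would use the naturality of the associator in its third argument, i.e.\
\[
\alpha_{u,w,y}\circ\bigl(u\diamond(w\diamond\alpha)\bigr)=\bigl((u\diamond w)\diamond\alpha\bigr)\circ\alpha_{u,w,x},
\]
to rewrite
\[
M\bigl((u\diamond w)\diamond\alpha\bigr)\circ M(\alpha_{u,w,x})=M(\alpha_{u,w,y})\circ M\bigl(u\diamond(w\diamond\alpha)\bigr),
\]
and then invoke functoriality of the $A$-action (the third bullet in the description of monoids given in the excerpt) together with $A(u)(a)=a$ to obtain $M_\alpha(a\times_{M_x}m)=a\times_{M_y}M_\alpha(m)$. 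Thus each $M_\alpha$ is $A$-linear, $\mathrm{P}_\alpha:\mathrm{P}_x\to\mathrm{P}_y$ is a natural transformation of endofunctors of $A$-$\mathrm{Mod}$, and combining with the already-established functoriality on composites completes the proof.
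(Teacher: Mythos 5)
Your proposal is correct and follows essentially the same route as the paper's proof: endow $M_x$ with the $A$-action inherited from $M$ (the paper suppresses the associators you write out explicitly), note that $P_x(f)$ remains $A$-linear, and deduce that $P_{\alpha,M}$ is a morphism of $A$-modules from the functoriality of the action, with the functor axioms for $P_{\,\_\,}$ already supplied by Lemma \ref{824}. The only difference is your level of detail on the coherence bookkeeping, which the paper elides.
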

\begin{proof}
Let $x$ and $\varphi$ be an object and an arrow in $\mathcal{X}$. By Lemma \ref{824}, we only need to verify that  $\mathrm{P}_x$  restricts from $A$-Mod to $A$-Mod and that the components of the transformation $\mathrm{P}_{\varphi}:\mathrm{P}_x\rightarrow \mathrm{P}_y$, as defined before, are morphisms of $A$-modules. 

Let $M$ be an $A$-module, then
\begin{displaymath}
A(y)\times M_x(z)\rightarrow M_x(y\diamond z),
\end{displaymath}
given by the original action of $A$ on $M$, endows $M_x$ with a structure of $A$-module. Also, if $f:M\rightarrow N$ is a morphism of $A$-modules, then it is immediate to see that $P_x(f)$, is a morphism of $A$-modules. Hence we can restrict $P_x:A$-$\textrm{Mod}\rightarrow A$-$\textrm{Mod}$. Now let $M$ be an $A$-module, the functoriality of the action of $A$ on $M$ shows that $\textrm{P}_{\alpha,\, M}:M_x\rightarrow M_y$ is a morphism of $A$-modules.
\end{proof}

\begin{rem}
\label{modder}
As said in Section \ref{sec-prem}, $N$ is a right $A$-module if and only if it is an $A^{op}$-module. The two structures are related by the following equation
\begin{displaymath}
a\times n =N(s_{z\diamond y}^{y\diamond z})(n\times a),
\end{displaymath}
for $a\in A(y)$,  $n\in N(z)$ and  where $s_{z\diamond y}^{y\diamond z}:z\diamond y\rightarrow y\diamond z$ is the symmetry in $\mathcal{X}$. Now, if we begin with a right $A$-module, $M$, then it is an $A^{op}$-module and then $N=M_x$ is an $A^{op}$-module, as in the previous lemma.  After some straightforward computations we see that its structure of right $A$-module is given by
\begin{displaymath}
M_x(z)\times A(y)\rightarrow M_x(z\diamond y),\, (m,\, a)\mapsto M(s_{z\diamond x\diamond y}^{z\diamond y\diamond x})(m\times a). 
\end{displaymath}
A word of caution is needed, since, in general we will write $m\times a$ for the action of $A$ in \textit{any} right $A$-module $N$ regardless of whether it is shifted or not. However, if $N$ is  explicitly a shifted module $M_x$, we will write the corresponding symmetry.
\end{rem}

Given two $A$-modules $M$ and $N$, we have that $\textrm{Hom}_{A-\mathrm{Mod}}(M,\,N)$ is an $R$-submodule of $\textrm{Hom}_{\mathcal{F}}(M,\,N)$. Moreover, for an $A$-module $M$, we have a functor $\textrm{Hom}_{A-\mathrm{Mod}}(M,\,\_):A$-$\mathrm{Mod}\rightarrow R$-$\mathrm{Mod}$. 
 
 \begin{defi}
Let $M$ and $N$ be objects in $A$-Mod, we denote by $\mathcal{H}_A(M,\, N)$ the object in $\mathcal{F}$ defined by
\begin{displaymath}
\mathcal{H}_A(M,\, N)=\textrm{Hom}_{A-\mathrm{Mod}}(M,\, N_{\,\_\,}).
\end{displaymath}
\end{defi}

\begin{lema}
\label{modA}
The correspondence $\mathcal{H}_A(\,\_\,,\,\_\,):(A$-$\mathrm{Mod})^{op}\times A$-$\mathrm{Mod}\rightarrow \mathcal{F}$ is a bilinear functor.
\end{lema}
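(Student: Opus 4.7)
The plan is to unpack the definition $\mathcal{H}_A(M,N)(x)=\mathrm{Hom}_{A\text{-}\mathrm{Mod}}(M,N_x)$ and check, in order, (a) that $\mathcal{H}_A(M,N)$ is an $R$-linear functor $\mathcal{X}\to R\text{-}\mathrm{Mod}$, (b) that morphisms of $A$-modules in either variable induce arrows in $\mathcal{F}$, (c) functoriality, and (d) $R$-bilinearity. Everything reduces to observing that all the structure is transported by the Yoneda--Dress construction, and that the needed compatibilities were already packaged in Lemmas \ref{824} and \ref{restric}.

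For (a), fix $M$ and $N$ in $A$-$\mathrm{Mod}$. On objects $x\in\mathcal{X}$ we set $\mathcal{H}_A(M,N)(x)=\mathrm{Hom}_{A\text{-}\mathrm{Mod}}(M,N_x)$, which is an $R$-module. For an arrow $\varphi:x\to y$ in $\mathcal{X}$, Lemma \ref{restric} says $N_\varphi:=\mathrm{P}_{\varphi,N}:N_x\to N_y$ is a morphism of $A$-modules, so post-composition with $N_\varphi$ yields an $R$-linear map $\mathcal{H}_A(M,N)(\varphi):\mathrm{Hom}_{A\text{-}\mathrm{Mod}}(M,N_x)\to\mathrm{Hom}_{A\text{-}\mathrm{Mod}}(M,N_y)$. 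The functoriality $N_{\psi\circ\varphi}=N_\psi\circ N_\varphi$ and $R$-linearity in $\varphi$ follow from points 3 and 4 of Lemma \ref{824}, so $\mathcal{H}_A(M,N)\in\mathcal{F}$.

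For (b), given morphisms of $A$-modules $f:M'\to M$ and $g:N\to N'$, define $\mathcal{H}_A(f,g)$ pointwise by
\[
\mathcal{H}_A(f,g)_x(h)=g_x\circ h\circ f,\qquad h\in\mathrm{Hom}_{A\text{-}\mathrm{Mod}}(M,N_x),
\]
where $g_x=\mathrm{P}_x(g):N_x\to N'_x$ is a morphism of $A$-modules by Lemma \ref{restric}. Thus the composite lies in $\mathrm{Hom}_{A\text{-}\mathrm{Mod}}(M',N'_x)$. Naturality in $x$, i.e. the commutativity of $\mathcal{H}_A(M',N')(\varphi)\circ\mathcal{H}_A(f,g)_x=\mathcal{H}_A(f,g)_y\circ\mathcal{H}_A(M,N)(\varphi)$, reduces to the identity $N'_\varphi\circ g_x=g_y\circ N_\varphi$, which is exactly the naturality of $g$ under the Yoneda--Dress shift (Lemma \ref{824}, the fact that $\mathrm{P}_\varphi$ is a natural transformation $\mathrm{P}_x\to\mathrm{P}_y$).

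For (c) and (d), functoriality in the pair of variables is straightforward: $\mathcal{H}_A(\mathrm{Id}_M,\mathrm{Id}_N)_x(h)=\mathrm{Id}_{N_x}\circ h\circ\mathrm{Id}_M=h$, and given further $f':M''\to M'$, $g':N'\to N''$ one has
\[
\mathcal{H}_A(f\circ f',g'\circ g)_x(h)=(g'\circ g)_x\circ h\circ (f\circ f')=g'_x\circ(g_x\circ h\circ f)\circ f'=\mathcal{H}_A(f',g')_x\bigl(\mathcal{H}_A(f,g)_x(h)\bigr),
\]
using that $\mathrm{P}_x$ is a functor (so $(g'\circ g)_x=g'_x\circ g_x$). $R$-bilinearity in $(f,g)$ follows from the $R$-bilinearity of composition in $\mathcal{F}$ and the $R$-linearity of $\mathrm{P}_x$. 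The only mild obstacle is the bookkeeping verification that $\mathcal{H}_A(f,g)$ is genuinely a morphism in $\mathcal{F}$ (naturality in $x$), but once Lemma \ref{restric} and point 2 of Lemma \ref{824} are invoked this is immediate, so no substantive difficulty arises.
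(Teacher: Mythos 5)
Your proof is correct and takes the same route as the paper: the paper's entire proof is the one-line remark that the statement follows from Lemma \ref{restric}, and what you have written is precisely the unpacking of that remark (using Lemma \ref{restric} to land in $A$-Mod and Lemma \ref{824} for naturality, composition and $R$-linearity). The verifications you carry out are the intended ones and contain no gaps.
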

\begin{proof}
This follows from Lemma \ref{restric}.
\end{proof}

\begin{coro}
\label{yonA}
For $A$ and $M$ as before, we have an isomorphism in $\mathcal{F}$,
\begin{displaymath}
\mathcal{H}_A(A,\,M)\cong M.
\end{displaymath}
\end{coro}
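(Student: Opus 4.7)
The plan is to unwind the definition pointwise and combine the monoid-action Yoneda lemma (Lemma \ref{yonA1}) with the unitor $\lambda$ of $\mathcal{X}$.

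For each object $x$ of $\mathcal{X}$, one has by definition $\mathcal{H}_A(A,\,M)(x)=\mathrm{Hom}_{A\text{-}\mathrm{Mod}}(A,\,M_x)$, where $M_x$ carries the $A$-module structure provided by Lemma \ref{restric}. Applying Lemma \ref{yonA1} to the $A$-module $M_x$ yields an isomorphism of $R$-modules
\[
\mathrm{Hom}_{A\text{-}\mathrm{Mod}}(A,\,M_x)\;\cong\; M_x(\uno)\;=\;M(\uno\diamond x),
\]
sending a morphism $f:A\to M_x$ to $f_{\uno}(\varepsilon_A)$. Composing with the $R$-linear isomorphism $M(\lambda_x):M(\uno\diamond x)\to M(x)$ gives a candidate isomorphism $\Phi_x:\mathcal{H}_A(A,\,M)(x)\to M(x)$ at each $x$.

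The remaining work is to check that the collection $\{\Phi_x\}_{x\in\mathcal{X}}$ is natural in $x$, i.e.\ defines an arrow in $\mathcal{F}$. Given $\varphi:x\to y$ in $\mathcal{X}$, Lemma \ref{824} describes the map $\mathcal{H}_A(A,\,M)(\varphi)$ as post-composition with $\mathrm{P}_{\varphi,\,M}:M_x\to M_y$, whose component at $\uno$ is $M(\uno\diamond\varphi)$. Hence for $f\in\mathrm{Hom}_{A\text{-}\mathrm{Mod}}(A,\,M_x)$ we get
\[
\Phi_y\bigl(\mathrm{P}_{\varphi,\,M}\circ f\bigr)=M(\lambda_y)\circ M(\uno\diamond\varphi)\bigl(f_{\uno}(\varepsilon_A)\bigr),
\]
while $M(\varphi)\circ\Phi_x(f)=M(\varphi)\circ M(\lambda_x)\bigl(f_{\uno}(\varepsilon_A)\bigr)$. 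By functoriality of $M$, equality of both sides reduces to $\lambda_y\circ(\uno\diamond\varphi)=\varphi\circ\lambda_x$, which is the naturality of the unitor $\lambda$ in $\mathcal{X}$.

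The main (mild) obstacle is just bookkeeping: being careful that the $A$-module structure on $M_x$ used in Lemma \ref{yonA1} is precisely the one from Lemma \ref{restric}, and that the transition morphisms of $\mathcal{H}_A(A,\,M)$ as an object of $\mathcal{F}$ are correctly identified with $\mathrm{P}_{\varphi,\,M}$-postcomposition. Once this is set up, the naturality square collapses to the naturality of $\lambda$, and bijectivity of each $\Phi_x$ is immediate from Lemma \ref{yonA1} together with invertibility of $\lambda_x$.
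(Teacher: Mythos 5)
Your proof is correct and follows essentially the same route as the paper: both reduce to Lemma \ref{yonA1} applied to the shifted modules $M_x$ and then identify $M(\uno\diamond x)$ with $M(x)$ via the unitor. The only difference is presentational — you verify naturality pointwise, while the paper phrases the same content as precomposing the natural equivalence $\mathrm{Hom}_{A\text{-}\mathrm{Mod}}(A,\,\_\,)\cong ev^A_{\uno}$ with the functor $M_{\,\_\,}$.
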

\begin{proof}
The bijection of Lemma \ref{yonA1} defines a natural equivalence between the functor $\mathrm{Hom}_{A-\mathrm{Mod}}(A,\,\_\,)$ and the evaluation functor, $ev^A_{\uno}$, going from $A$-Mod to $R$-Mod. By precomposing with $M_{\,\_\,}$
we obtain an equivalence between $\mathcal{H}_A(A,\, M)$ and $ev^A_{\uno}\circ M_{\,\_\,}$, but clearly the last one is equivalent to $M$.

\end{proof}

\section{The Hochschild cochain complex}

\subsection{Bar resolution of a monoid}
\label{subsec-resbar}

Let $A$ be a monoid in $\mathcal{F}$. We omit the morphism $\alpha$ of diagram 1 and write $A^{\otimes n}$ for the tensor product of $A$ with itself $n$ times. Also, for simplicity, we write $A^{\otimes i+j}$ instead of $A^{\otimes (i+j)}$, for suitable $i$ and $j$.
 
Consider the following sequence in $\mathcal{F}$,
\[
C_{\ast}^{bar}(A)  : \xymatrix{
\ldots\ar[r]&A^{\otimes n+1}\ar[r]^b&A^{\otimes n}\ar[r]^b&\ldots\ar[r]^b&A^{\otimes 2}
}
\]
where $A^{\otimes 2}$ is in degree zero and $b:A^{\otimes n+1}\rightarrow A^{\otimes n}$, for $n\geq 2$, is defined in the following way. Given $n\geq 2$,  define $d_i:A^{\otimes n+1}\rightarrow A^{\otimes n}$, for $1\leq i\leq n$, as 
\begin{displaymath}
d_i=A\otimes \cdots \otimes A\otimes \mu\otimes A\otimes\cdots \otimes A,
\end{displaymath} 
where $\mu$ appears in the position $i$ of the tensor product. That is, $d_1=\mu\otimes A^{\otimes n-1}$, $d_i=A^{\otimes i-1}\otimes \mu \otimes A^{\otimes n-i}$ if $1<i<n$ and $d_n=A^{\otimes n-1}\otimes \mu$. With this, $b:=\sum_{i=1}^n (-1)^{i+1}d_i$. When considering the augmented complex, having $A$ in degree -1, with morphism $b=\mu: A^{\otimes 2}\rightarrow A$ and 0 in all degrees $j<-1$, we will write $C_{\ast}^{bar}(A)_{\mu}$.

\begin{lema}
\label{presimplicial}
The sequence $C_{\ast}^{bar}(A)$ is a \textit{presimplicial} object in $\mathcal{F}$, that is, the arrows $d_i$ defined above satisfy
\begin{displaymath}
d_i\circ d_j =d_{j-1}\circ d_i \quad 1\leq i < j\leq n. 
\end{displaymath}
Moreover, $C_{\ast}^{bar}(A)_{\mu}$ is a complex in $\mathcal{F}$.
\end{lema}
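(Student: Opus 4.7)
The plan is to verify the presimplicial identities $d_i \circ d_j = d_{j-1} \circ d_i$ for $1 \le i < j \le n$ directly from the explicit description of the $d_i$, and then derive $b \circ b = 0$ by the standard signed-cancellation argument. I would split the presimplicial check into two cases. When $j > i+1$, the two insertions of $\mu$ act on disjoint pairs of tensor positions, and both $d_i \circ d_j$ and $d_{j-1} \circ d_i$ express the arrow obtained by placing $\mu$ simultaneously at positions $(i,i+1)$ and $(j,j+1)$ of $A^{\otimes n+1}$; so the identity reduces to the bifunctoriality of $\otimes$ in $\mathcal{F}$. When $j = i+1$, the identity $d_i \circ d_{i+1} = d_i \circ d_i$ is determined by what happens on the three consecutive factors at positions $i, i+1, i+2$ and, after suppressing $\alpha^{\mathcal{F}}$ as in the excerpt, reads $\mu \circ (A \otimes \mu) = \mu \circ (\mu \otimes A)$. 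This is exactly diagram 1 of the monoid definition recalled in Section \ref{sec-prem}.

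Once the presimplicial identities are in hand, I would compute
\[
b \circ b = \sum_{i=1}^{n-1}\sum_{j=1}^{n} (-1)^{i+j}\, d_i \circ d_j
\]
and split the double sum according to $i < j$ and $i \ge j$. In the $i < j$ part, substituting $d_i \circ d_j = d_{j-1} \circ d_i$ and reindexing by $k = j-1$ produces $-\sum_{i \le k} (-1)^{i+k} d_k \circ d_i$, which after a renaming of variables cancels exactly the $i \ge j$ part. Hence $b \circ b = 0$. For the augmented complex $C_{\ast}^{bar}(A)_{\mu}$, the only additional identity to check is $\mu \circ b = 0$ with $b = \mu \otimes A - A \otimes \mu \colon A^{\otimes 3} \to A^{\otimes 2}$, and this is again just the associativity of $\mu$.

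The only real obstacle is bookkeeping: one must track carefully at which position $\mu$ is inserted, and verify that the index shift $k = j-1$ matches the ranges of the two summations so that the cancellation is exact. Apart from this indexing care, the proof needs no input beyond the bifunctoriality of $\otimes$ in $\mathcal{F}$ and the associativity axiom for the monoid $A$ recalled in Section \ref{sec-prem}.
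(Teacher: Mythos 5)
Your proposal is correct and follows essentially the same route as the paper: the case $j>i+1$ by bifunctoriality of $\otimes$, the case $j=i+1$ and the identity $\mu\circ b=0$ by the associativity diagram for $A$, and then the standard signed cancellation for $b\circ b=0$ (which the paper simply cites from Loday). Your version just spells out the index bookkeeping that the paper leaves implicit.
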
 
\begin{proof}
Notice that if $i+1<j$, then we clearly have $d_i\circ d_j =d_{j-1}\circ d_i$. On the other hand, the commutativity of diagram 1 implies the previous equality if $j=i+1$. It also implies that $\mu\circ b=0$. With this, the proof of $b\circ b=0$ is standard (see for example Lemma 1.0.7 of \cite{loday}).
\end{proof}

The structure of $A$-bimodule of $A$ is given by diagram 1. 
Now consider $A^{\otimes n}$ with $n\geq 2$. The structure of  $A$-bimodule of $A^{\otimes n}$ is given by $d_{n}\circ d_1:A^{\otimes n+2}\rightarrow A^{\otimes n}$, which is equal to $d_1\circ d_{n+1}$.

\begin{prop}
\label{laresbar}
Let $A$ be a monoid in $\mathcal{F}$, the complex $C_{\ast}^{bar}(A)$ is a resolution of the $A^e$-module $A$. It is called the bar resolution of $A$.
\end{prop}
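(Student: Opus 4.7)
The plan is to prove acyclicity of the augmented complex $C_{\ast}^{bar}(A)_{\mu}$ by exhibiting an explicit contracting homotopy. That homotopy will only be a morphism in $\mathcal{F}$ and not in $A^e$-$\mathrm{Mod}$; this is fine because the forgetful functor $A^e$-$\mathrm{Mod}\to \mathcal{F}$ is faithful and exact, so acyclicity of the underlying sequence of objects of $\mathcal{F}$ implies acyclicity in $A^e$-$\mathrm{Mod}$, which is exactly what is needed for $C_{\ast}^{bar}(A)$ to be a resolution of the $A^e$-module $A$.

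Before getting to the homotopy I would verify that $b$ and the augmentation $\mu$ are morphisms of $A^e$-modules. For $\mu:A^{\otimes 2}\to A$ this is exactly the associativity diagram of $A$. For the interior faces $d_i$ with $1<i<n$, the $\mu$ acts between factors that the outer $A^e$-action leaves alone, so $A^e$-linearity is automatic. For $d_1$ and $d_n$ the multiplication interacts with one of the outermost factors, and $A^e$-linearity again reduces to the associativity of $\mu$. Taking alternating sums yields $A^e$-linearity of $b$.

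Next I would introduce the contracting homotopy
\[
h_n=(e\otimes A^{\otimes n+1})\circ (\lambda^{\mathcal{F}})^{-1}:A^{\otimes n+1}\longrightarrow A^{\otimes n+2}
\]
for every $n\geq -1$, where the case $n=-1$ uses the convention $A^{\otimes 0}=I$ so that $h_{-1}:A\to A^{\otimes 2}$ simply inserts a unit on the left. The unit diagram (diagram 2) yields $\mu\circ h_{-1}=\mathrm{id}_A$ and, in positive degrees, $d_1\circ h_n=\mathrm{id}_{A^{\otimes n+1}}$, while for every $i\geq 2$ one has $d_i\circ h_n=h_{n-1}\circ d_{i-1}$, since $h_n$ only inserts a new leftmost factor $e$ that the face $d_i$ does not touch. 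Combining these identities with the alternating signs in $b$ produces $b\circ h_n + h_{n-1}\circ b = \mathrm{id}_{A^{\otimes n+1}}$ by a standard telescoping argument. Equivalently, using Remark \ref{flechas} one can check the identity pointwise at each $x\in\mathcal{X}$, where it collapses to the classical computation of Section 1.1 of \cite{loday}.

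The main obstacle is not the underlying algebra, which is classical, but the categorical bookkeeping: the unitors $\lambda^{\mathcal{F}}$ and associators $\alpha^{\mathcal{F}}$ must be threaded through the definition of $h_n$ and through the compositions $b\circ h_n$ and $h_{n-1}\circ b$. Once the formula for $h_n$ is fixed and one commits either to the pointwise reduction via Remark \ref{flechas} or to a purely diagrammatic verification with $\lambda^{\mathcal{F}}$ and $\alpha^{\mathcal{F}}$, the check is mechanical and the conclusion follows.
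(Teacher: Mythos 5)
Your proposal is correct and follows essentially the same route as the paper: the paper also proves acyclicity via the extra-degeneracy operator $q=((e\otimes A)\circ\lambda^{-1})\otimes A^{\otimes n-1}$ (your $h_n$, up to associators), using the identities $d_1\circ q=\mathrm{id}$ and $d_i\circ q=q\circ d_{i-1}$ for $i\geq 2$, and separately checks that the differentials are $A^e$-linear (the paper packages that check via the presimplicial identity $d_i\circ d_{n+1}\circ d_1=d_n\circ d_1\circ d_{i+1}$ rather than a case analysis on $d_1$, $d_n$ versus interior faces, but the content is the same).
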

\begin{proof}
We will first show that the complex $C_{\ast}^{bar}(A)_{\mu}$ is exact and then we will verify that it is a complex of $A^e$-modules.

Using diagram 1, one can show that $\mu$ is an epimorphism in $\mathcal{F}$ but we prefer to define the operators of \textit{extra degeneracy} and show that the complex is acyclic. 
Consider first $q:A\rightarrow A\otimes A$ as the morphism
\[
\xymatrix{
A\ar[r]^-{\lambda^{-1}}&I\otimes A\ar[r]^-{e\otimes A}&A\otimes A
}
\]
from diagram 2. Next, for $n\geq 2$, define
\begin{displaymath}
q:A^{\otimes n}\rightarrow A^{\otimes n+1},\quad q=((e\otimes A)\circ \lambda^{-1})\otimes A^{\otimes n-1}.
\end{displaymath}
 It is easy to see that if $2\leq i\leq n$, then $d_i\circ q=q\circ d_{i-1}$. On the other hand, diagram 2 shows that for any $n\geq 1$ we have $d_1\circ q=A^{\otimes n}$. This implies $b\circ q+q\circ b=A^{\otimes n}$ for $n\geq 2$ and $\mu\circ q=A$. Hence $q$ is a contracting homotopy and $C_{\ast}^{bar}(A)_{\mu}$ is acyclic.

Now we see that $C_{\ast}^{bar}(A)_{\mu}$ is a complex of $A^e$-modules. The arrow $\mu$ is a morphism of $A^e$-modules thanks to the  associativity of the product, i.e. diagram 1. Next, let $n\geq 2$ and $d_i:A^{\otimes n+1}\rightarrow A^{\otimes n}$ with $1\leq i\leq n$. To see that the action of $A^e$ commutes with $b$, we verify it commutes with the arrows $d_i$, that is, we check the commutativity of the following diagram
\[
\xymatrix{
A^{\otimes n+3}\ar[rr]^-{d_{n+1}\circ\, d_1}\ar[d]_-{A\otimes d_{i+1}\otimes A}&& A^{\otimes n+1}\ar[d]^-{d_i}\\
A^{\otimes n+2}\ar[rr]^-{d_n\circ\, d_1}&&A^{\otimes n},
}
\]
where the horizontal arrows correspond to the  structures of $A$-bimodules of $A^{\otimes n+1}$ and $A^{\otimes n}$. Notice that $A\otimes d_{i+1}\otimes A=d_{i+1}$. By Lemma \ref{presimplicial} we have
\begin{displaymath}
d_i\circ d_{n+1}\circ d_1=d_n\circ d_1\circ d_{i+1}.
\end{displaymath}
Hence, $C_{\ast}^{bar}(A)_{\mu}$ is a complex of $A^e$-modules. 
\end{proof}
 
\subsection{Hochschild cochain complex} 
\label{subsec-comple}

\begin{defi}
\label{defhoch}
Let $A$ be a monoid in $\mathcal{F}$ and $M$ be an $A$-bimodule. The Hochschild cochain complex is the complex in $\mathcal{F}$ given by $\mathcal{H}_{A^e}(C^{bar}_{\ast}(A),\, M)$. Since $\mathcal{F}$ is an abelian category, we define the Hochschild cohomology of $A$ with coefficients in $M$ as the object in $\mathcal{F}$ given by 
\begin{displaymath}
\mathcal{H}H^n(A,\, M):=H^n(\mathcal{H}_{A^e}(C^{bar}_{\ast}(A),\, M))
\end{displaymath}
\end{defi}

It is easy to see that $\mathcal{H}H^n(A,\,\_\,)$ defines a functor from $A^e$-Mod to $\mathcal{F}$. 

The next definition generalizes Definition 18 in \cite{centros}.

\begin{defi}
Let $A$ be a monoid in $\mathcal{F}$ and $M$ be an $A$-bimodule. The commutant of $M$ at $x\in \mathcal{X}$ is
\begin{displaymath}
CM_A(x)=\{m\in M(x)\mid a\times m=M(s_{x\diamond y}^{y\diamond x})(m\times a), \forall a\in A(y), \forall y\in \mathcal{X}\},
\end{displaymath} 
where $s_{x\diamond y}^{y\diamond x}:x\diamond y\rightarrow y\diamond x$ is the symmetry in $\mathcal{X}$. If the $A$-bimodule is $A$ itself, we keep the notation $CA$ of \cite{centros}.
\end{defi}

Since $s_{y\diamond x}^{x\diamond y}$ is a natural isomorphism, it is easy to see that $CM_A$ is a subfunctor of $M$. Also, we will see in the last section that the functors $\mathcal{H}H^n(A,\, M)$ are all $CA$-modules.

To describe the Hochschild cochain complex we need the following lemma.

\begin{lema}
\label{deschom}
Let $A$ be a monoid in $\mathcal{F}$, $M$ be an $A$-bimodule and $n\geq 0$ be an integer. We have the  following isomorphism in $\mathcal{F}$,
\begin{displaymath}
\mathcal{H}_{A^e}(A^{\otimes n+2},\, M)\cong \mathcal{H}(A^{\otimes n},\, M),
\end{displaymath}
where, if $n=0$, then $A^{\otimes 0}$ denotes $\mathcal{X}(\uno,\,\_\,)$. Also, $\mathcal{H}_{A^e}(A,\, M)\cong CM_A$ in $\mathcal{F}$.
\end{lema}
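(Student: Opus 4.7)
The plan is to treat the two claimed isomorphisms separately. For the first, $\mathcal{H}_{A^e}(A^{\otimes n+2},\, M) \cong \mathcal{H}(A^{\otimes n},\, M)$, the strategy is to exhibit $A^{\otimes n+2}$ as the free $A^e$-module on $A^{\otimes n}$ and then invoke the free-forget adjunction between $A^e\mbox{-}\mathrm{Mod}$ and $\mathcal{F}$. For the second, $\mathcal{H}_{A^e}(A,\, M) \cong CM_A$, the strategy is to unwind what an $A^e$-module morphism $A \rightarrow M_x$ is, using the bilinear description of Remark \ref{flechas} together with Lemma \ref{yonA1}, and to check that such a morphism is exactly the datum of an element of $CM_A(x)$.

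For the first isomorphism, I will write $A^{\otimes n+2} = A \otimes A^{\otimes n} \otimes A$ and observe that the $A^e = A \otimes A^{op}$-action is concentrated on the two outer factors. I will then construct an explicit $A^e$-module isomorphism
\[
\Phi_n : A^e \otimes A^{\otimes n} \longrightarrow A^{\otimes n+2}
\]
by applying a suitable symmetry $S$ that moves the second tensor factor of $A^e$ past $A^{\otimes n}$ to the right-hand end. The verification that $\Phi_n$ is $A^e$-linear is routine: the left-$A$ action is trivially compatible, and the right $A^{op}$-action is compatible precisely because Remark \ref{bimodop} encodes the bimodule structure using the very same symmetry. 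Once $\Phi_n$ is in hand, the free-module adjunction gives, naturally in $x$,
\[
\mathrm{Hom}_{A^e\mbox{-}\mathrm{Mod}}(A^{\otimes n+2},\, M_x) \cong \mathrm{Hom}_{A^e\mbox{-}\mathrm{Mod}}(A^e \otimes A^{\otimes n},\, M_x) \cong \mathrm{Hom}_{\mathcal{F}}(A^{\otimes n},\, M_x),
\]
so these isomorphisms assemble into the desired isomorphism in $\mathcal{F}$. The case $n=0$ is the same under the convention that the empty tensor product is the unit $I = \mathcal{X}(\uno,\, \_\,)$.

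For the second isomorphism, I will describe an $A^e$-module morphism $f : A \rightarrow M_x$ via Remark \ref{flechas}: it is an $\mathcal{F}$-morphism satisfying both a left and a right $A$-action compatibility. By Lemma \ref{yonA1}, such an $f$ is determined by $m := f_{\uno}(\varepsilon_A) \in M_x(\uno) \cong M(x)$; moreover, starting from an arbitrary $a \in A(y)$, using the identity-element relations $a = A(\rho_y)(a \times \varepsilon_A) = A(\lambda_y)(\varepsilon_A \times a)$ together with the naturality of $f$ computes $f_y(a)$ in two ways, once from the left action as $a \times m$ and once from the right action as $M(s_{x \diamond y}^{y \diamond x})(m \times a)$ (up to the appropriate unit isomorphism of $M_x$). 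These two expressions coincide for every $a$ if and only if $m$ satisfies the defining relation of $CM_A(x)$, and conversely any such $m$ determines a well-defined $A^e$-module morphism by the formula $a \mapsto a \times m$. Naturality in $x$ promotes this bijection of $R$-modules to an isomorphism of functors in $\mathcal{F}$.

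The main obstacle is symmetry bookkeeping: the $A^e$-action is built from $A^{op}$, whose monoid structure is defined through the symmetry $S$ of $\mathcal{F}$, so both the $A^e$-linearity of $\Phi_n$ and the translation of the right-action compatibility into the commutant relation require threading symmetries through the diagrams with some care. Nevertheless, these verifications reduce to naturality and coherence of the symmetry $s$ of $\mathcal{X}$ together with the associativity and unit axioms of $A$, so no substantial new idea is needed beyond the constructions already introduced.
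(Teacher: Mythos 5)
Your proof is correct, but it packages the first isomorphism differently from the paper. The paper does not pass through the free--forget adjunction: for $n\geq 1$ it describes an $A^e$-linear map $f:A^{\otimes n+2}\to N$ via the multilinear maps of Remark \ref{flechas}, observes the identity $\tilde f_{w,y,z}(a,c,b)=a\times\tilde f_{\uno,y,\uno}(\varepsilon_A,c,\varepsilon_A)\times b$, and concludes that restriction to the middle factor gives a natural isomorphism $\mathrm{Hom}_{A^e\hbox{-}\mathrm{Mod}}(A^{\otimes n+2},\,\_\,)\cong\mathrm{Hom}_{\mathcal F}(A^{\otimes n},\,\_\,)$; this is exactly the element-level form of your claim that $A^{\otimes n+2}$ is the free $A^e$-module on $A^{\otimes n}$. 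Your route via an explicit isomorphism $\Phi_n:\mathcal I_{A^e}(A^{\otimes n})\to A^{\otimes n+2}$ and the adjunction $\mathcal I_{A^e}\dashv\mathcal R_{A^e}$ (which the paper only introduces later, in Section 5) is conceptually cleaner and makes the relative projectivity of the $A^{\otimes i}$ transparent, but it shifts the symmetry bookkeeping into the verification that $\Phi_n$ intertwines the $A^{op}$-actions, and it hides the explicit formula $f\mapsto\tilde f_{\uno,\,\_\,,\uno}(\varepsilon_A,\,\_\,,\varepsilon_A)$ that the paper immediately reuses to write down the differentials $\beta$ of the Hochschild cochain complex; if you take your route you should still extract that formula for later use. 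For $n=0$ the two arguments coincide ($A^{\otimes 2}\cong A^e$ as $A^e$-modules, then Yoneda). For the second isomorphism your direct unwinding of a bimodule morphism $A\to M_x$ is essentially the paper's argument --- the paper reaches the assignment $f\mapsto f_{\uno}(\varepsilon_A)$ by precomposing with the epimorphism $\mu$ and using Lemma \ref{yonA1} implicitly, while you invoke Lemma \ref{yonA1} directly --- and you are in fact more explicit than the paper about the converse direction, namely that every $m\in CM_A(x)$ determines a well-defined bimodule morphism $a\mapsto a\times m$.
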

\begin{proof}
For the first statement, consider $n=0$. Using Remark \ref{bimodop}, it is easy to see that $A^{\otimes 2}$ and $A^e$ are isomorphic as $A^e$-modules. Then, by  lemmas \ref{yonF} and \ref{yonA}, we have the following isomorphisms in $\mathcal{F}$,
\begin{displaymath}
\mathcal{H}_{A^e}(A^{\otimes 2},\,M)\cong M \cong\mathcal{H}(\mathcal{X}(\uno,\,\_\,),\,M).
\end{displaymath}
Hence $\mathcal{H}_{A^e}(A^{\otimes 2},\, M)\cong \mathcal{H}(A^{\otimes 0},\, M)$.

Now suppose $n\geq 1$ and let $N$ be an $A$-bimodule. Then, by Remark \ref{flechas},  an arrow $f:A^{\otimes n+2}\rightarrow N$, of $A^e$-modules, is given by a corresponding  collection of maps
\begin{displaymath}
\tilde{f}_{w,\,y,\,z}:A(w)\times A^{\otimes n}(y)\times A(z)\rightarrow N(w\diamond y\diamond z)
\end{displaymath}
linear and functorial in each entry. These maps also satisfy the following identity
\begin{displaymath}
\tilde{f}_{u\diamond w,\, y,\, z\diamond v}(\alpha\times a,\, c,\, b\times \beta)=\alpha\times \tilde{f}_{w,\, y,\, z}(a,\, c,\, b)\times \beta
\end{displaymath}
for all objects $u$, $v$, $w$, $y$ and $z$ and elements $a$, $b$, $c$, $\alpha$ and $\beta$ in the corresponding evaluations. In particular 
\begin{displaymath}
\tilde{f}_{w,\, y,\, z}(a,\, c,\, b)=a\times \tilde{f}_{\uno,\, y,\, \uno}(\varepsilon_A,\, c,\, \varepsilon_A)\times b,
\end{displaymath}
 and $\tilde{f}_{\uno,\,y,\,\uno}$ is functorial in $y$ and linear in the variable $c$. Hence, if we consider the functors $\mathrm{Hom}_{A^e-\mathrm{Mod}}(A^{\otimes n+2},\,\_\,)$ and $\mathrm{Hom}_{\mathcal{F}}(A^{\otimes n},\,\_\,)$ from $A^e$-Mod to $R$-Mod, we obtain a natural isomorphism between them. Indeed, by sending $f$ to $\tilde{f}_{\uno,\,\_ \, ,\uno}(\varepsilon_A,\,\_ ,\,\varepsilon_A)$ we define an isomorphism of $R$-modules from $\mathrm{Hom}_{A^e-\mathrm{Mod}}(A^{\otimes n+2},\,N)$ to $\mathrm{Hom}_{\mathcal{F}}(A^{\otimes n},\,N)$, which is clearly natural in $N$. By precomposition with the functor $M_{\,\_\,}:\mathcal{X}\rightarrow A^e$-Mod, we obtain the result.
 
The second statement follows as in the classical case. That is,  since $\mu:A\otimes A\rightarrow A$ is an epimorphism, we have a monomorphism in $\mathcal{F}$,
\begin{displaymath}
\mathcal{H}_{A^e}(\mu, \, M): \mathcal{H}_{A^e}(A, \, M)\rightarrow \mathcal{H}_{A^e}(A^{\otimes 2}, \, M)\cong M
\end{displaymath}
defined in $x\in \mathcal{X}$ by sending $f\in \mathrm{Hom}_{A^e-\mathrm{Mod}}(A, \, M_x)$ to $M(\lambda_x)(f\circ\mu)_{\uno}\left((\varepsilon_{A^e})\right)$. By Remark \ref{mu-uni}  we have $\mu_{\uno}(\varepsilon_{A^e})=\varepsilon_A$, since $\varepsilon_{A^e}=\varepsilon_{A\otimes A}$. Hence $(f\circ\mu)_{\uno}(\varepsilon_{A^e})=f_{\uno}(\varepsilon_A)$. But, since $f$ is a morphism of $A$-bimodules, it is easy to see that $M(\lambda_x)(f_{\uno}(\varepsilon_A))$ must be in $CM_A(x)$. On the other hand,  given an element $n\in CM_A(x)$ and defining $f_y:A(y)\rightarrow M_x(y)$ by $a\mapsto a\times n$, we obtain a morphism of bimodules, $f$, such that $M(\lambda_x)(f_{\uno}(\varepsilon_A))=n$.
\end{proof}
 
With this, the complex $\mathcal{H}_{A^e}(C^{bar}_{\ast}(A)_{\mu},\, M)$ now looks like  
\[
\xymatrix{
0\ar[r]&CM_A\ar[r]& M  \ar[r]^-{\beta}&\mathcal{H}(A,\, M)  \ar[r]^-{\beta}&\mathcal{H}(A^{\otimes 2},\, M)  \ar[r]^-{\beta}& \ldots,
}
\]
with $M$ in degree zero. The second arrow from the left is just the inclusion of $CM_A$ in $M$. Now let $x$ be an object in $\mathcal{X}$.  We see immediately that   $\beta_x:M(x)\rightarrow \mathrm{Hom}_{\mathcal{F}}(A,\, M_x)$ is given by
\begin{displaymath}
\beta_x(m)_y:A(y)\rightarrow M(y\diamond x),\quad a\mapsto a\times m-M(s_{x\diamond y}^{y\diamond x})(m\times a).
\end{displaymath}
The rest of the arrows $\beta$ are described as follows. Let $n$ be an integer with $n\geq 1$. In the complex $\mathrm{Hom}_{A^e-\mathrm{Mod}}(C^{bar}_{\ast}(A),\, M_x)$, an arrow $\varphi$ in $\mathrm{Hom}_{A^e-\mathrm{Mod}}(A^{\otimes n+2},\, M_x)$ is sent to $\varphi\circ b$ in   $\mathrm{Hom}_{A^e-\mathrm{Mod}}(A^{\otimes n+3},\, M_x)$. We will see what this means in terms of the corresponding linear maps of Remark \ref{flechas}. 

Denote by $\tilde{\varphi}$ the $(n+2)$-linear map that corresponds to $\varphi$ by Remark \ref{flechas}. As in the previous lemma, the corresponding $n$-linear map     
\begin{displaymath}
\tilde{\varphi}_{\uno,x_1^2,\ldots, x_n^{n+1},\uno}: A(x_1)\times \cdots\times A(x_{n})\longrightarrow M_x(x_1\diamond\cdots\diamond x_{n})
\end{displaymath}
is given by $\tilde{\varphi}_{\uno,x_1^2,\ldots, x_n^{n+1},\uno}(\varepsilon_A,\,\_\,,\ldots,\,\_\,, \varepsilon_A)$. In the variables $x_j$ we add the superscript $i$ to denote the position of $x_j$ in $\tilde{\varphi}$. If $d_i:A^{\otimes n+3}\rightarrow A^{\otimes n+2}$, with $2\leq i \leq n+1$, is one of the morphisms defined in Section \ref{subsec-resbar}, then it is not hard to see that 
\begin{displaymath}
\widetilde{(\varphi\circ d_i)}_{\uno,x_1^2,\ldots, x_{n+1}^{n+2},\uno}: A(x_1)\times \cdots\times A(x_{n+1})\longrightarrow M_x(x_1\diamond\cdots\diamond x_{n+1})
\end{displaymath}
sends an element 
$(a_1, \ldots ,a_{n+1})$ to 
\begin{displaymath}
\tilde{\varphi}_{\uno,x_1^2,\ldots ,x_{i-1}^i\diamond x_i^{i+1},\ldots ,x_{n+1}^{n+1},\uno}(\varepsilon_A, a_1,\ldots ,a_{i-1}\times a_i,\ldots ,a_{n+1},\varepsilon_A).
\end{displaymath}
Also,  
\begin{displaymath}
\widetilde{(\varphi\circ d_1})_{\uno,x_1^2,\ldots, x_{n+1}^{n+2},\uno}(a_1,\ldots ,a_{n+1})=\tilde{\varphi}_{x_1^1,\ldots, x_{n+1}^{n+1},\uno}(a_1,\ldots ,a_{n+1},\, \varepsilon_A)
\end{displaymath}
and  
\begin{displaymath}
\widetilde{(\varphi\circ d_{n+2}})_{\uno,x_1^2,\ldots, x_{n+1}^{n+2},\uno}(a_1,\ldots ,a_{n+1})=\tilde{\varphi}_{\uno,x_1^2,\ldots, x_{n+1}^{n+2}}(\varepsilon_A,\, a_1,\ldots ,a_{n+1}). 
\end{displaymath}
Thus, if we let $f\in \mathrm{Hom}_{\mathcal{F}}(A^{\otimes n},\, M_x)$ to be the arrow corresponding to $\varphi$ and $\beta_x (f)$  be the arrow in $\mathrm{Hom}_{\mathcal{F}}(A^{\otimes n+1},\, M_x)$ corresponding to $\varphi\circ b$, then
\begin{displaymath}
\widetilde{\beta_x (f)}_{x_1,\ldots ,x_{n+1}}: A(x_1)\times \cdots\times A(x_{n+1})\longrightarrow M_x(x_1\diamond\cdots\diamond x_{n+1})
\end{displaymath}
sends $(a_1, \ldots ,a_{n+1})$ to 
\begin{displaymath}
\begin{array}{l}
a_1\times \tilde{f}_{x_2,\ldots ,x_{n+1}}(a_2,\ldots ,a_{n+1})\\
+\sum_{i=1}^n(-1)^i\tilde{f}_{x_1,\ldots ,x_i\diamond x_{i+1},\ldots ,x_{n+1}}(a_1,\ldots ,a_i\times a_{i+1},\ldots ,a_{n+1})\\
+(-1)^{n+1}M(s_{x_1\diamond \ldots \diamond x_n\diamond x\diamond x_{n+1}}^{x_1\diamond \ldots \diamond x_{n+1}\diamond x})(\tilde{f}_{x_1,\ldots ,x_n}(a_1,\ldots ,a_n)\times a_{n+1}),
\end{array}
\tag{$\dagger$}
\end{displaymath}
where the last symmetry comes from Remark \ref{modder}. The vanishing of  expression $\dagger$ is called a   cocycle condition and, in this case, the arrow $f$ is called a cocycle.

\section{Relative projectivity and separable monoids} 
\label{sec-relpro}

Given  a monoid $A$ in $\mathcal{F}$, we consider the functors
\begin{displaymath}
\mathcal{R}_A:A\textrm{-Mod}\rightarrow \mathcal{F}\quad \textnormal{and}\quad \mathcal{I}_A:\mathcal{F}\rightarrow A\textrm{-Mod},
\end{displaymath} 
the first one is just the forgetful functor and the second one sends a functor $F$ to $A\otimes F$ and an arrow $f$ to $A\otimes f$. It is easy to see that $\mathcal{I}_A$ is a left adjoint of $\mathcal{R}_A$.

Following Definition 4.1 in \cite{resmac} we say that an $A$-module $M$ is \textit{projective with respect to the restriction $\mathcal{R}_A$} if for any diagram
\[
\xymatrix{
 & M\ar[d]^-{\beta}\\
 X\ar[r]^-{\alpha}& Y
}
\]
in $A$-Mod such that $\mathcal{R}_A(\alpha)$ is a split epimorphism, there exists a morphism $\varphi:M\rightarrow X$ in $A$-Mod such that $\alpha\varphi=\beta$.

Now consider $\mathcal{R}_{A^e}$ and $\mathcal{I}_{A^e}$. Lemma 4.6 in \cite{resmac} translates in the following way.

\begin{lema}
\label{respro}
For every object $M$ in $A^e$-Mod there exists a resolution 
\[
\xymatrix{
\cdots\ar[r]&L_i\ar[r]&L_{i-1}\ar[r]&\cdots\ar[r]&L_0\ar[r]&M\ar[r]&0
}
\]
where the $L_i$ are projective with respect to the restriction $\mathcal{R}_{A^e}$ and such that
\[
\xymatrix{
\cdots\ar[r]&\mathcal{R}_{A^e}(L_i)\ar[r]&\mathcal{R}_{A^e}(L_{i-1})\ar[r]&\cdots\ar[r]&\mathcal{R}_{A^e}(L_0)\ar[r]&\mathcal{R}_{A^e}(M)\ar[r]&0
}
\]
is an exact split complex. Moreover, such a resolution for $M$ is unique up to homotopy.
\end{lema}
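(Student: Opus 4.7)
The plan is to realize the desired resolution as the standard bar construction associated to the adjunction $\mathcal{I}_{A^e}\dashv \mathcal{R}_{A^e}$, verifying both the relative projectivity of its terms and the split-exactness of its $\mathcal{R}_{A^e}$-image by purely formal adjunction arguments; the uniqueness up to homotopy then follows from the classical comparison lemma, adapted to the relative setting.

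First I would check that, for every $F\in\mathcal{F}$, the $A^e$-module $\mathcal{I}_{A^e}(F)=A^e\otimes F$ is projective with respect to $\mathcal{R}_{A^e}$. Given $\alpha:X\to Y$ such that $\mathcal{R}_{A^e}(\alpha)$ admits a splitting $\sigma$ in $\mathcal{F}$, and any $\beta:\mathcal{I}_{A^e}(F)\to Y$, I would use the adjunction bijection $\mathrm{Hom}_{A^e-\mathrm{Mod}}(\mathcal{I}_{A^e}(F),Z)\cong \mathrm{Hom}_{\mathcal{F}}(F,\mathcal{R}_{A^e}(Z))$ to transpose $\beta$ to some $\tilde\beta$, and define $\varphi$ as the transpose of $\sigma\circ\tilde\beta$. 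Naturality of the adjunction combined with $\mathcal{R}_{A^e}(\alpha)\circ\sigma=\mathrm{Id}$ yields $\alpha\circ\varphi=\beta$, as required.

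Next I would build the resolution by iterating the counit. Set $L_0=\mathcal{I}_{A^e}\mathcal{R}_{A^e}(M)$, with augmentation the counit $\epsilon_M:L_0\to M$; the unit $\eta_{\mathcal{R}_{A^e}(M)}:\mathcal{R}_{A^e}(M)\to \mathcal{R}_{A^e}(L_0)$ splits $\mathcal{R}_{A^e}(\epsilon_M)$ thanks to one of the triangle identities. Letting $M_1=\Ker(\epsilon_M)$, the fact that $\mathcal{R}_{A^e}$ preserves kernels together with the splitting above implies that $0\to \mathcal{R}_{A^e}(M_1)\to \mathcal{R}_{A^e}(L_0)\to \mathcal{R}_{A^e}(M)\to 0$ is split exact in $\mathcal{F}$. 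Iterating with $L_1=\mathcal{I}_{A^e}\mathcal{R}_{A^e}(M_1)$, and so on, and splicing the resulting short split exact sequences, produces a resolution of $M$ by relatively projective modules whose image under $\mathcal{R}_{A^e}$ is a splicing of split short exact sequences in $\mathcal{F}$, hence admits a contracting homotopy.

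For uniqueness, given two such resolutions $L_\bullet\to M$ and $L'_\bullet\to M$, I would run the classical comparison argument: construct a chain map lifting $\mathrm{Id}_M$ inductively, using at each stage that the relevant map in $L'_\bullet$ becomes, after applying $\mathcal{R}_{A^e}$, a split surjection onto the appropriate cycles — exactly the condition needed to lift a morphism out of the relatively projective module $L_n$. The same recursive procedure applied to the difference of two such lifts produces a chain homotopy. The step I would be most attentive to is precisely this inductive lifting: it relies on the split-exactness after $\mathcal{R}_{A^e}$ of the target resolution (not merely its exactness in $A^e$-Mod), so that hypothesis is used at \emph{every} inductive step and not only at the initial construction.
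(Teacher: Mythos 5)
Your argument is correct. The paper gives no proof of this lemma at all --- it is stated as a direct translation of Lemma 4.6 of the cited reference on relative projectivity --- and your construction (relative projectivity of $\mathcal{I}_{A^e}(F)$ via the adjunction $\mathcal{I}_{A^e}\dashv\mathcal{R}_{A^e}$, the iterated counit-and-kernel construction spliced into a resolution whose restriction is a splice of split short exact sequences, and the relative comparison lemma for uniqueness up to homotopy) is precisely the standard relative homological algebra argument that this citation encapsulates, with the key point correctly identified that the split-exactness after $\mathcal{R}_{A^e}$ is what permits each inductive lifting step.
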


\begin{rem}
\label{barpro}
By point 3 of Lemma 4.3 in \cite{resmac}, every $A^{\otimes i}$ with $i\geq 2$, is projective with respect to $\mathcal{R}_A$. Also, by the proof of Proposition \ref{laresbar}, the restriction of the bar resolution, $\mathcal{R}_{A^e}(C^{bar}_{\ast}(A)_{\mu})$,
is an exact split complex. Hence, the bar resolution is a resolution for $A$ as in the previous lemma. 
\end{rem}

This remark will also help us to prove the following result.

\begin{lema}
\label{ses}
Let $E$ be a short exact sequence in $A^e$-Mod, 
\[
\xymatrix{
0\ar[r]&K\ar[r]^-j&M\ar[r]^-z&N\ar[r]&0
}
\]
such that $\mathcal{R}_{A^e}(z)$ is a split epimorphism. Then we have a short exact sequence of complexes in $\mathcal{F}$,
\[
\xymatrix{
0\ar[r]&\mathcal{H}_{A^e}(C^{bar}_{\ast}(A),\, K)\ar[r]&\mathcal{H}_{A^e}(C^{bar}_{\ast}(A),\, M)\ar[r]&\mathcal{H}_{A^e}(C^{bar}_{\ast}(A),\, N)\ar[r]&0.
}
\]
\end{lema}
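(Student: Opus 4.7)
The plan is to verify pointwise at each object $x \in \mathcal{X}$ that applying $\mathrm{Hom}_{A^e-\mathrm{Mod}}(A^{\otimes n+2},\,\_\,)$ to the given sequence preserves short exactness; assembling these over $x$ delivers $\mathcal{H}_{A^e}(A^{\otimes n+2},\,\_\,)$ applied to the sequence, and compatibility with the bar differential then promotes this to the claimed short exact sequence of complexes.

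To execute this, I first fix $x \in \mathcal{X}$ and shift $E$ by $\mathrm{P}_x$:
\[
0 \to K_x \to M_x \to N_x \to 0.
\]
This remains exact in $A^e$-Mod, since exactness in $\mathcal{F}$ is tested pointwise on $\mathcal{X}$ and $\mathrm{P}_x$ is just precomposition with $\mathrm{p}_x$. Moreover, any splitting $s:N\to M$ of $z$ in $\mathcal{F}$ shifts by functoriality of $\mathrm{P}_x$ to a splitting $s_x:N_x\to M_x$ of $z_x$, so $\mathcal{R}_{A^e}(z_x)$ is still a split epimorphism.

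Next, I apply $\mathrm{Hom}_{A^e-\mathrm{Mod}}(A^{\otimes n+2},\,\_\,)$ for fixed $n\geq 0$. Left exactness of Hom gives exactness at the first two positions for free. Surjectivity on the right is precisely the definition of projectivity of $A^{\otimes n+2}$ with respect to $\mathcal{R}_{A^e}$: every $A^e$-linear morphism $\beta:A^{\otimes n+2}\to N_x$ lifts along the $\mathcal{R}_{A^e}$-split epimorphism $z_x$ to some $\varphi:A^{\otimes n+2}\to M_x$, so every element of $\mathrm{Hom}_{A^e-\mathrm{Mod}}(A^{\otimes n+2},\,N_x)$ is hit by postcomposition with $z_x$. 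Remark \ref{barpro} guarantees this projectivity for every $n\geq 0$.

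The resulting sequences are natural in $x$, hence assemble into the short exact sequence in $\mathcal{F}$
\[
0\to \mathcal{H}_{A^e}(A^{\otimes n+2},\,K)\to \mathcal{H}_{A^e}(A^{\otimes n+2},\,M)\to \mathcal{H}_{A^e}(A^{\otimes n+2},\,N)\to 0
\]
in each degree $n$. Functoriality of $\mathcal{H}_{A^e}(\,\_\,,\,\_\,)$ in the first variable (Lemma \ref{modA}) makes these sequences compatible with the differentials induced by $b:A^{\otimes n+1}\to A^{\otimes n}$, yielding the asserted short exact sequence of complexes. The only point requiring any care is the persistence of the $\mathcal{F}$-splitting under shift, which is formal; I anticipate no substantive obstacle, the whole argument being the standard one that a relatively projective resolution converts $\mathcal{R}_{A^e}$-split short exact sequences into short exact sequences of Hom-complexes.
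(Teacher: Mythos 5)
Your proof is correct and follows essentially the same route as the paper's: shift the sequence by $\mathrm{P}_x$, observe that exactness and the $\mathcal{R}_{A^e}$-splitting persist under the shift, and then combine left exactness of Hom with the relative projectivity of $A^{\otimes n+2}$ (Remark \ref{barpro}) to get surjectivity on the right. The assembly over $x$ and compatibility with the bar differential are handled the same way in the paper, so there is nothing to add.
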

\begin{proof}
Observe first that, for an object $x$ in $\mathcal{X}$, the shifted sequence $E_x$,
\[
\xymatrix{
0\ar[r]&K_x\ar[r]^-{j_x}&M_x\ar[r]^-{z_x}&N_x\ar[r]&0
}
\]
is also a short exact sequence in $A^e$-Mod. Also $(\mathcal{R}_{A^e}(L))_x=\mathcal{R}_{A^e}(L_x)$ and $(\mathcal{R}_{A^e}(l))_x=\mathcal{R}_{A^e}(l_x)$ for any $A$-bimodule $L$ and any morphism of $A$-bimodules $l$. Finally, if $\mathcal{R}_{A^e}(z)$ is a split epimorphism, then $\mathcal{R}_{A^e}(z_x)$ is also a split epimorphism.

It is easy to see that we have a sequence of complexes in $\mathcal{F}$,
\[
\xymatrix{
0\ar[r]&\mathcal{H}_{A^e}(C^{bar}_{\ast}(A),\, K)\ar[r]&\mathcal{H}_{A^e}(C^{bar}_{\ast}(A),\, M)\ar[r]&\mathcal{H}_{A^e}(C^{bar}_{\ast}(A),\, N)\ar[r]&0.
}
\]
Now, Let $i\geq 2$.  The hom functor $\mathcal{H}_{A^e}(A^{\otimes i},\,\_\,)$ is clearly left exact, so to show that the sequence
\[
\xymatrix{
0\ar[r]&\mathrm{Hom}_{A^e}(A^{\otimes i},\, K_x)\ar[r]&\mathrm{Hom}_{A^e}(A^{\otimes i},\, M_x)\ar[r]&\mathrm{Hom}_{A^e}(A^{\otimes i},\, N_x)\ar[r]&0
}
\]
(we abbreviate $\mathrm{Hom}_{A^e-\textrm{Mod}}$ as $\mathrm{Hom}_{A^e}$) is exact, it suffices to show that the morphism $\mathrm{Hom}_{A^e}(A^{\otimes i},\, z_x)$ from $\mathrm{Hom}_{A^e}(A^{\otimes i},\, M_x)$ to $\mathrm{Hom}_{A^e}(A^{\otimes i},\, N_x)$ is surjective. Let $f\in\mathrm{Hom}_{A^e}(A^{\otimes i},\, N_x)$. As in the previous remark, every $A^{\otimes i}$ with $i\geq 2$ is projective with respect to $\mathcal{R}_{A^e}$, so, by definition, there exists $\varphi: A^{\otimes i}\rightarrow M_x$ in $A^e$-Mod such that $z_x\circ \varphi=f_x$.
\end{proof}

We have immediately the following corollary.

\begin{coro}
\label{sula}
If $E$ is a short exact sequence in $A^e$-Mod as in the previous lemma, then we have a long exact sequence
\[
\xymatrix{
0\ar[r]&\mathcal{H}H^{0}(A,\, K)\ar[r]&\mathcal{H}H^0(A,\, M)\ar[r]&\mathcal{H}H^0(A,\, N)\ar[r]&\mathcal{H}H^1(A,\,K)\ldots 
}
\] 
in $\mathcal{F}$.
\end{coro}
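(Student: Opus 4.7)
The plan is to invoke, inside the abelian category $\mathcal{F}$, the standard long exact sequence in cohomology attached to a short exact sequence of cochain complexes. Lemma \ref{ses} already provides the short exact sequence of complexes
\[
0\rightarrow \mathcal{H}_{A^e}(C^{bar}_{\ast}(A),\, K)\rightarrow \mathcal{H}_{A^e}(C^{bar}_{\ast}(A),\, M)\rightarrow \mathcal{H}_{A^e}(C^{bar}_{\ast}(A),\, N)\rightarrow 0
\]
in $\mathcal{F}$, and by Definition \ref{defhoch}, the Hochschild cohomology functors $\mathcal{H}H^n(A,\,\_\,)$ are, by construction, the cohomology objects of these complexes. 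So the corollary reduces to the zigzag/snake lemma in $\mathcal{F}$.

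Since $\mathcal{F}$ is an abelian category (recorded in Section \ref{sec-prem}), the snake lemma, and hence the usual cohomology long exact sequence, hold in $\mathcal{F}$ with exactly the same statement and proof as in $R$-Mod; the argument only uses kernels, cokernels, images and the exactness axiom of an abelian category. Concretely, for each $n\geq 0$ I would construct the connecting morphism
\[
\delta^n:\mathcal{H}H^n(A,\, N)\longrightarrow \mathcal{H}H^{n+1}(A,\, K)
\]
by the standard diagram chase on the snake diagram built from the $n$-th coboundary and $(n+1)$-th cocycle sub- and quotient-objects of the three complexes, and then check exactness at the three types of nodes $\mathcal{H}H^n(A,\, K)$, $\mathcal{H}H^n(A,\, M)$, $\mathcal{H}H^n(A,\, N)$. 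The initial zero and the injectivity of $\mathcal{H}H^0(A,\, K)\hookrightarrow \mathcal{H}H^0(A,\, M)$ follow at once from the exactness of the sequence of $0$-cochains (not only from its exactness in cohomology), which is part of Lemma \ref{ses}.

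The main obstacle, such as it is, is simply a formal one: one must record that all the ingredients of the snake construction (kernels, cokernels, factorisation through the image) performed on morphisms of $\mathcal{F}$ produce morphisms in $\mathcal{F}$, so that the connecting maps $\delta^n$ and the exactness relations hold in $\mathcal{F}$ itself and not merely pointwise in $R$-Mod. Once this is noted, there is no further calculation: the corollary is a direct application of the general abelian-categorical result.
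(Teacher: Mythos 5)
Your proposal is correct and matches the paper's intent exactly: the paper offers no separate proof, presenting the corollary as an immediate consequence of Lemma \ref{ses} together with the standard long exact cohomology sequence attached to a short exact sequence of cochain complexes in the abelian category $\mathcal{F}$. Your additional remarks about constructing the connecting morphisms via the snake lemma and checking that everything lives in $\mathcal{F}$ simply make explicit what the paper leaves to the reader.
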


Since for any $A$-bimodule $M$, the functor $\mathcal{H}_{A^e}(\,\_\,,\, M)$ is additive, we can calculate the Hochschild cohomology with coefficients in $M$ from any resolution that satisfies the conditions of Lemma \ref{respro}, as it is the case in the following example.

\begin{defi}
Let $A$ be a monoid in $\mathcal{F}$. We say that $A$ is separable if $\mu :A\otimes A\rightarrow A$ is a split epimorphism in $A^e$-Mod. 
\end{defi}

\begin{teo}
\label{teosep}
If $A$ is a separable monoid in $\mathcal{F}$, then for any $A$-bimodule $M$ we have $\mathcal{H}H^i(A,\, M)=0$ for every $i>0$. Conversely, if $A$ is a monoid in $\mathcal{F}$ such that $\mathcal{H}H^1(A,\, M)=0$ for any $A$-bimodule $M$, then $A$ is separable.
\end{teo}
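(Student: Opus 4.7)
The approach is to prove the two implications separately. The forward direction will reduce to exhibiting a very short resolution of $A$, while the converse will be extracted from the long exact sequence of Corollary \ref{sula} applied to the kernel of $\mu$.

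Suppose first that $A$ is separable, so that $\mu:A\otimes A\to A$ admits an $A^e$-linear section. Then $A$ is a direct summand of $A^{\otimes 2}$ in $A^e$-Mod. By Remark \ref{barpro}, $A^{\otimes 2}$ is projective with respect to $\mathcal{R}_{A^e}$, and a routine summand argument (any retract of a relatively projective module is relatively projective) shows that $A$ is itself projective with respect to $\mathcal{R}_{A^e}$. Consequently, the trivial augmented complex $0\to A\xrightarrow{\Id_A} A\to 0$ is a resolution of $A$ of the type produced by Lemma \ref{respro}: its only nonzero term is $\mathcal{R}_{A^e}$-projective, and its $\mathcal{R}_{A^e}$-restriction is tautologically exact and split. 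Using the uniqueness-up-to-homotopy clause of that lemma, together with the additivity of $\mathcal{H}_{A^e}(\,\_\,,\, M)$, I conclude that $\mathcal{H}H^i(A,\, M)$ may equally be computed from this trivial resolution, giving a complex concentrated in degree zero. Hence $\mathcal{H}H^i(A,\, M)=0$ for every $i>0$ and every $A$-bimodule $M$.

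For the converse, set $J=\Ker\mu$ and consider the short exact sequence
\[
0\to J\to A\otimes A\xrightarrow{\mu} A\to 0
\]
in $A^e$-Mod. The contracting homotopy $q$ exhibited in the proof of Proposition \ref{laresbar} supplies a splitting of $\mathcal{R}_{A^e}(\mu)$, so Corollary \ref{sula} applies and produces a long exact sequence
\[
\cdots\to\mathcal{H}H^0(A,\, A\otimes A)\xrightarrow{\mu_{\ast}}\mathcal{H}H^0(A,\, A)\to\mathcal{H}H^1(A,\, J)\to\cdots
\]
in $\mathcal{F}$. By hypothesis $\mathcal{H}H^1(A,\, J)=0$, so $\mu_{\ast}$ is a functorial epimorphism. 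Evaluating at $\uno$ and invoking the identifications $\mathcal{H}H^0(A,\, M)(\uno)\cong CM_A(\uno)\cong \Hom_{A^e\textrm{-Mod}}(A,\, M)$ provided by Lemma \ref{deschom}, I obtain a surjection $\Hom_{A^e\textrm{-Mod}}(A,\, A\otimes A)\to \Hom_{A^e\textrm{-Mod}}(A,\, A)$ given by postcomposition with $\mu$. A preimage of $\Id_A$ under this surjection is then an $A^e$-linear section of $\mu$, showing that $A$ is separable.

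The most delicate bookkeeping I expect is in the converse: I must confirm that the explicit Yoneda-type isomorphism $f\mapsto f_{\uno}(\varepsilon_A)$ underlying Lemmas \ref{yonA1} and \ref{deschom} really sends $\Id_A\in\Hom_{A^e\textrm{-Mod}}(A,\, A)$ to $\varepsilon_A\in CA(\uno)$, and intertwines the map $\mathcal{H}H^0(A,\,\mu)$ with postcomposition by $\mu$. This compatibility is precisely what guarantees that lifting $\Id_A$ through $\mu_{\ast}$ produces a genuine section of $\mu$ rather than merely a map that agrees with $\mu$ on $\varepsilon_A$. Both points follow immediately by unpacking the formula from the proof of Lemma \ref{deschom}, using that $\mu$ is a morphism of monoids and that $\mu_{\uno}(\varepsilon_{A^e})=\varepsilon_A$. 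Beyond this identification, the remainder of the argument consists only of routine applications of the relative-projective machinery already developed in the paper.
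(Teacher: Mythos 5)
Your proof is correct and follows essentially the same route as the paper: the converse is verbatim the paper's argument (the long exact sequence of Corollary \ref{sula} applied to $0\to\Ker\mu\to A\otimes A\xrightarrow{\mu} A\to 0$, then lifting $\Id_A$ through the identification of $\mathcal{H}H^0$ evaluated at $\uno$). The only cosmetic difference is in the forward direction, where you contract to the trivial resolution $0\to A\xrightarrow{\Id_A} A\to 0$ after observing that $A$ is a retract of the relatively projective $A^{\otimes 2}$, whereas the paper keeps the two-term resolution $0\to\Ker\mu\to A\otimes A\to A\to 0$ and instead notes that $\Ker\mu$ is relatively projective; both variants rest on the same uniqueness-up-to-homotopy clause of Lemma \ref{respro} and the additivity of $\mathcal{H}_{A^e}(\,\_\,,\,M)$.
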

\begin{proof}
If $A$ is separable, we have a split short exact sequence in $A^e$-Mod
\[
\xymatrix{
0\ar[r]&\textrm{Ker}\mu\ar[r]&A\otimes A\ar[r]^-{\mu}&A\ar[r]&0.
}
\]
As in Remark \ref{barpro}, $A\otimes A$ is projective with respect to the restriction $\mathcal{R}_{A^e}$ and by Lemma 4.2 in \cite{resmac}, we have that $\textrm{Ker}\mu$ is also projective with respect to $\mathcal{R}_{A^e}$. Hence, this short exact sequence gives a resolution for $A$ as in Lemma \ref{respro}. Since this resolution is contractible, we have that the bar resolution is contractible and thus, the complex $\mathcal{H}_{A^e}(C^{bar}_{\ast}(A)_{\mu},\, M)$ is contractible.

For the other direction, consider the short exact sequence in $A^e$-Mod,
\[
\xymatrix{
0\ar[r]&\textrm{Ker}\mu\ar[r]&A\otimes A\ar[r]^-{\mu}&A\ar[r]&0
}
\]
and let $I=\textrm{Ker}\mu$. By the first lines of the proof of Proposition  \ref{laresbar}, we have that $\mathcal{R}_{A^e}(\mu)$ is a split epimorphism. Then, by Corollary \ref{sula}, we have the following exact sequence in $\mathcal{F}$
\[
\xymatrix{
0\ar[r]&\mathcal{H}H^0(A,\, I)\ar[r]&\mathcal{H}H^0(A,\, A\otimes A)\ar[r]&\mathcal{H}H^0(A,\, A)\ar[r]& 0,
}
\]
where the  zero on the right corresponds to $\mathcal{H}H^1(A,\, I)=0$. But, by the last lines of the previous section, we have that $\mathcal{H}H^0(A,\, M)\cong CM_A$ for any $A$-bimodule $M$, hence, by Lemma \ref{deschom}, we have the following exact sequence in $R$-Mod,
\[
\xymatrix{
0\ar[r]&\mathcal{H}_{A^e}(A,\, I)(\uno)\ar[r]&\mathcal{H}_{A^e}(A,\, A\otimes A)(\uno)\ar[r]&\mathcal{H}_{A^e}(A,\, A)(\uno)\ar[r]& 0.
}
\]
So, for the identity morphism, $A$ in $\mathcal{H}_{A^e}(A,\, A)(\uno)$, there exists a morphism $f\in \mathcal{H}_{A^e}(A,\, A\otimes A)(\uno)$ such that $\mu\circ f=A$.
\end{proof}

We will finish the section with an example but, first, we need the following lemmas. The first one generalizes what happens in the classical case.

\begin{lema}
Let $A$ be a monoid in $\mathcal{F}$. Then $A$ is separable if and only if there exists an element $\xi\in C(A\otimes A)_{A}(\uno)$ such that $\mu_{\uno}(\xi)=\varepsilon$.  Moreover, $\xi$ determines the morphism of separability $t:A\rightarrow A\otimes A$. 
\end{lema}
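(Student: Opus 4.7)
The plan is to translate the splitting condition on $\mu$ into a statement about a single element, via the isomorphism $\mathrm{Hom}_{A^e\textrm{-Mod}}(A,\,M)\cong CM_A(\uno)$ that is implicit in Lemma \ref{deschom} and Lemma \ref{yonA1}. Recall that a separability datum is an $A^e$-module morphism $t:A\to A\otimes A$ with $\mu\circ t=A$, so the task is to put $A^e$-linear morphisms $A\to A\otimes A$ into correspondence with elements $\xi\in C(A\otimes A)_A(\uno)$ such that $\mu_{\uno}(\xi)=\varepsilon_A$.

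First I would make the correspondence explicit. By Lemma \ref{yonA1}, applied to $A$ as a left $A$-module, any morphism of left $A$-modules $f:A\to N$ is determined by $f_{\uno}(\varepsilon_A)\in N(\uno)$, and every element of $N(\uno)$ arises this way. When $N=A\otimes A$ is viewed as an $A$-bimodule (i.e.\ as an $A^e$-module via Remark \ref{bimodop}), the proof of Lemma \ref{deschom} shows that the additional condition that $f$ be $A^e$-linear — rather than merely left $A$-linear — translates exactly to the condition that $f_{\uno}(\varepsilon_A)$ lie in $C(A\otimes A)_A(\uno)$. Thus the assignment $t\mapsto \xi:=t_{\uno}(\varepsilon_A)$ is a bijection
\[
\mathrm{Hom}_{A^e\textrm{-Mod}}(A,\,A\otimes A)\;\longleftrightarrow\; C(A\otimes A)_A(\uno).
\]

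Next I would compute $\mu\circ t$ on $\varepsilon_A$. Since $\mu:A\otimes A\to A$ is a morphism of $A$-bimodules, composition gives a morphism $\mu\circ t:A\to A$ of $A^e$-modules, and $(\mu\circ t)_{\uno}(\varepsilon_A)=\mu_{\uno}(t_{\uno}(\varepsilon_A))=\mu_{\uno}(\xi)$. The identity $\id_A$ also belongs to $\mathrm{Hom}_{A^e\textrm{-Mod}}(A,\,A)$ and takes $\varepsilon_A$ to $\varepsilon_A$. By the uniqueness half of the Yoneda-type isomorphism of Lemma \ref{yonA1}, the equality $\mu\circ t=\id_A$ is therefore equivalent to $\mu_{\uno}(\xi)=\varepsilon_A$. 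This proves both implications simultaneously: $A$ is separable iff such a $\xi$ exists, and the morphism of separability $t:A\to A\otimes A$ is the unique $A^e$-module morphism determined by $\xi$ through $t_{\uno}(\varepsilon_A)=\xi$.

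The only subtle point — and the main thing to check carefully rather than a real obstacle — is that all the relevant structure indeed survives: one should verify that $C(A\otimes A)_A$ is formed using the genuine $A$-bimodule structure on $A\otimes A$ (so that ``central'' elements at $\uno$ really correspond to two-sided linearity, not just left linearity), and that $\mu$ being a morphism of monoids guarantees $\mu\circ t$ is $A^e$-linear, so that Lemma \ref{yonA1} applies to compare it with $\id_A$. Once these identifications are in place, the equivalence and the explicit formula $t\leftrightarrow\xi$ are immediate.
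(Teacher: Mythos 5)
Your argument is correct, but it reaches the conclusion by a different route than the paper. You treat the lemma as a formal corollary of the isomorphism $\mathcal{H}_{A^e}(A,\,M)\cong CM_A$ from Lemma \ref{deschom} evaluated at $\uno$ (giving the bijection $\mathrm{Hom}_{A^e\textrm{-Mod}}(A,\,A\otimes A)\leftrightarrow C(A\otimes A)_A(\uno)$, $t\mapsto t_{\uno}(\varepsilon_A)$), combined with the injectivity in Lemma \ref{yonA1} to convert $\mu\circ t=A$ into $\mu_{\uno}(\xi)=\varepsilon_A$; that last reduction is clean and both implications drop out at once. The paper instead works at the level of elements: it checks directly that $\xi=t_{\uno}(\varepsilon)$ is central by computing $a\times\xi$ and $\xi\times a$ via the bimodule property of $t$, and, conversely, it \emph{constructs} $t$ from $\xi$ by the explicit formula $t_x(a)=(A\otimes A)(\rho_x)(a\times\xi)$ and verifies $\mu\circ t=A$ by hand. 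What the paper's computation buys is this explicit formula for $t$, which is what makes the ``$\xi$ determines $t$'' clause concrete and is what gets reused in the subsequent lemma on epimorphisms of monoids (where one pushes $\xi_B$ forward to $\xi_A=(f\otimes f)(\xi_B)$). One caveat about your shortcut: the bijectivity you invoke rests on the surjectivity half of the isomorphism $\mathcal{H}_{A^e}(A,\,M)\cong CM_A$, and the paper's proof of Lemma \ref{deschom} only exhibits the monomorphism explicitly; the element-level construction of $t$ from $\xi$ in the paper's proof of the present lemma is in effect the argument that supplies that surjectivity. So your proof is valid if you take the statement of Lemma \ref{deschom} at face value, but if you want the argument to be self-contained you should still write down the formula $t_x(a)=(A\otimes A)(\rho_x)(a\times\xi)$ and check it is a bimodule morphism, which is exactly what the paper does.
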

\begin{proof}
Suppose first that $A$ is separable.  Then we have $t:A\rightarrow A\otimes A$ in $A^e$-Mod that satisfies $\mu\circ t=A$. Let $\xi=t_{\uno}(\varepsilon)$. 
For an object $x$ in  $\mathcal{X}$ and $a\in A(x)$, the action on the left gives
\begin{displaymath}
a\times \xi=a\times t_{\uno}(\varepsilon)=t_{x\diamond \uno}(a\times \varepsilon)=t_{x\diamond \uno}(A(\rho_x^{-1})(a))=(A\otimes A)(\rho_x^{-1}) (t_x(a)) 
\end{displaymath}
and on the right
\begin{displaymath}
\xi\times a= t_{\uno}(\varepsilon)\times a=t_{\uno\diamond x}(\varepsilon\times a)=t_{\uno\diamond x}(A(\lambda_x^{-1})(a))=(A\otimes A)(\lambda_x^{-1}) (t_x(a)), 
\end{displaymath}
but $\rho_x^{-1}=s_{\uno\diamond x}^{x\diamond \uno}\circ \lambda_x^{-1}$. This shows that $\xi\in C(A\otimes A)_{A}(\uno)$ and that it determines the morphism $t$.

Now suppose we have $\xi\in C(A\otimes A)_{A}(\uno)$ as in the hypothesis of the lemma and define $t:A\rightarrow A\otimes A$ in an object $x$ as
\begin{displaymath}
t_x(a):= (A\otimes A)(\rho_x)(a\times \xi),
\end{displaymath}
where $a\times \xi$ is the left action of $A$ on $A\otimes A$. Since $\xi\in C(A\otimes A)_{A}(\uno)$ this is equal to $(A\otimes A)(\lambda_x)(\xi\times a)$. This clearly defines a natural transformation from $A$ to $A\otimes A$ and the functoriality of the bi-action of $A$ on $A\otimes A$ shows that $t$ is a morphism of bimodules. Finally, we clearly have $t_{\uno}(\varepsilon)=\xi$ and thus $\mu_{\uno}\circ t_{\uno}(\varepsilon)=\varepsilon$. So, for any object $x$ in $\mathcal{X}$ and any $a\in A(x)$, since $\mu$ and $t$ are  morphisms of bimodules, we have
\begin{displaymath}
a=A(\rho_x)(\varepsilon\times a)=A(\rho_x)(\mu_{\uno \diamond x}t_{\uno\diamond x}(\varepsilon\times a))=\mu_{x}t_{x}(A(\rho_x)(\varepsilon\times a)),
\end{displaymath}
the last equality comes from the naturality of $\rho$. Thus $\mu\circ t=A$.
\end{proof}

\begin{prop}
Let $D$ and $A$ be monoids in $\mathcal{F}$ and suppose $f:D\rightarrow A$ is a morphism of monoids which is an epimorphism in $\mathcal{F}$. We have the following:
\begin{itemize}
\item[i)] If $D$ is separable, then $A$ is separable. 
\item[ii)] If $\mu_D$ is an isomorphism of $D$-bimodules, then $\mu_{A}$ is an isomorphism of $A$-bimodules.
\item[iii)] Suppose that $D$ is commutative and that $\mu_D$ is an isomorphism of $D$-bimodules. Then $A$ is commutative and $\mu_D$ and $\mu_A$ are isomorphisms of monoids. 
\end{itemize}

\end{prop}
\begin{proof}
Let us prove $i)$. We notice that $f$ (and we do not need it to be an epimorphism for this) induces a structure of $D$-bimodule in $A$, given on the left by $\mu_A\circ (f\otimes A)$ and on the right by $\mu_A\circ (A\otimes f)$.
Since $f$ is a morphism of monoids, it is easy to see it is also a morphism of $D$-bimodules. Then, $A\otimes A$ is a $D$-bimodule and $f\otimes f$ is a morphism of $D$-bimodules.

Let $\xi_D$ be an element in $C(D\otimes D)_{D}(\uno)$ as in the previous lemma and consider $\xi_A:=(f\otimes f)_{\uno}(\xi_D)$. We have
\begin{displaymath}
\mu_{A,\uno}(\xi_A)=\mu_{A,\uno}(f\otimes f)_{\uno}(\xi_D)=f_{\uno}\mu_{D,\uno}(\xi_D)=f_{\uno}(\varepsilon_D)=\varepsilon_A,
\end{displaymath}
since $f$ is a morphism of monoids.

Now let $x$ be  an object of $\mathcal{X}$ and $a\in A(x)$. Then there exists $b\in D(x)$ such that $f_x(b)=a$ and the left action of $a$ in $\xi_A$, $a\times \xi_A$, is equal to $f_x(b)\times (f\otimes f)_{\uno}(\xi_D)$. But this is precisely the left action of $D$ on $A\otimes A$ and, since $f\otimes f$ is a morphism of bimodules, we have that this is equal to
\begin{displaymath}
(f\otimes f)_{x\diamond \uno}(b\times \xi_D)=(f\otimes f)_{x\diamond \uno}\left((D\otimes D)(s_{\uno \diamond x}^{x\diamond \uno})(\xi_D\times b)\right),
\end{displaymath}
which, by functoriality, is equal to
\begin{displaymath}
(A\otimes A)(s_{\uno \diamond x}^{x\diamond \uno})((f\otimes f)_{\uno\diamond x}(\xi_D\times b)),
\end{displaymath}
which is equal to $(A\otimes A)(s_{\uno \diamond x}^{x\diamond \uno})(\xi_A\times a)$. Hence $\xi_A\in C(A\otimes A)_{A}(\uno)$ and $A$ is separable.

To prove $ii)$ suppose that $\mu_D$ has an inverse $t_D: D\rightarrow D\otimes D$ of $D$-bimodules. By the previous lemma, there exists an element $\xi_D\in C(D\otimes D)_D(\uno)$ such that $t_D=(D\otimes D)(\rho)(\,\_\,\times \xi_D)$. By point $i)$, if $\xi_A=(f\otimes f)_{\uno}(\xi_D)$, then $t_A:A\rightarrow A\otimes A$, given by $t_A=(A\otimes A)(\rho)(\,\_\,\times \xi_A)$, is a morphism of $A$-bimodules satisfying $\mu_At_A=A$. Let us see that $t_A\mu_A=A\otimes A$.

Let $x$ be an object in $\mathcal{X}$ and $b\in (A\otimes A)(x)$. Since $f$ is an epimorphism in $\mathcal{F}$, then $f\otimes f$ is also an epimorphism in $\mathcal{F}$. Hence, there exists $d\in (D\otimes D)(x)$ such that $(f\otimes f)_x(d)=b$. Thus
\begin{displaymath}
\begin{array}{rcl}
t_{A,\, x}\mu_{A,\, x}(b)& = & t_{A,\, x}\mu_{A,\, x}((f\otimes f)_x(d))\\
 & = & t_{A,\, x}(f_x\mu_{D,\, x}(d)) \quad \textrm{since f is a morphism of monoids}\\
 &=&(A\otimes A)(\rho_x)(f_x\mu_{D,\, x}(d)\times \xi_A)\\
 &=&(A\otimes A)(\rho_x)\left((f\otimes f)_{x\diamond \uno}(\mu_{D,\, x}(d)\times \xi_D) \right)\quad \textrm{as in point $i)$.}
\end{array}
\end{displaymath}
But $\mu_{D,\, x}(d)\times \xi_D$ is equal to $(D\otimes D)(\rho_x^{-1})(t_{D,\, x}\mu_{D,\, x}(d))$ and $t_{D,\, x}\mu_{D,\, x}(d)=d$, since $t_D\mu_D=D\otimes D$. Hence, $\mu_{D,\, x}(d)\times \xi_D=d\times \varepsilon_{D\otimes D}$, where the last product is the product in $D\otimes D$. Finally, since $f\otimes f$ is a morphism of monoids,
\begin{displaymath}
(f\otimes f)_{x\diamond \uno}(d\times \varepsilon_{D\otimes D})=(f\otimes f)_x(d)\times (f\otimes f)_{\uno}(\varepsilon_{D\otimes D})
\end{displaymath}
and so $t_{A,\, x}\mu_{A,\, x}(b)=(A\otimes A)(\rho_x)((f\otimes f)_x(d)\times \varepsilon_{A\otimes A})=b$.

Now suppose that $D$ es commutative. This means that $\mu_D\circ S=\mu_D$. Hence, 
\begin{displaymath}
\mu_A\circ(f\otimes f)=f\circ\mu_D=f\circ\mu_D\circ S=\mu_A\circ (f\otimes f)\circ S,
\end{displaymath}
the first equality on the left follows since $f$ is morphism of monoids. On the other hand, $(f\otimes f)\circ S=S\circ (f\otimes f)$. So, since $f\otimes f$ is an epimorphism, we have $\mu_A\circ S=\mu_A$ and $A$ is commutative. To finish $iii)$ it suffices to notice that $\mu$ becomes a morphism of monoids if the monoid is commutative. 
\end{proof}

\begin{coro}
 For very object $F$ in $\mathcal{F}$ and every $i>0$, we have $\mathcal{H}H^i(I,\, F)=0$. Also, if $A$ is a monoid  such that there exists a morphism of monoids $I\rightarrow A$ which is an epimorphism in $\mathcal{F}$, then $A$ is commutative and $A$-bimodules coincide with (left) $A$-modules. In particular, for every $A$-module $M$ and every $i>0$ we have $\mathcal{H}H^i(A,\, M)=0$.
\end{coro}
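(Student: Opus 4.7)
The plan is to derive both statements directly from Theorem \ref{teosep} together with the immediately preceding lemma. The key observation is that the tensor unit $I$ of $\mathcal{F}$ is itself a separable monoid, essentially for formal reasons.

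First I would verify that $I$ is separable. Since $I$ is the monoidal unit, the canonical isomorphism $\lambda^{\mathcal{F}}_I : I\otimes I \to I$ (equivalently $\rho^{\mathcal{F}}_I$) coincides with the monoid product $\mu_I$, so $\mu_I$ is in fact an isomorphism in $\mathcal{F}$. Since $I$-bimodule structures come from the unit constraints, $\mu_I$ is automatically a morphism in $I^e$-Mod, and its inverse provides a splitting in $I^e$-Mod. Thus $I$ is separable, and Theorem \ref{teosep} immediately gives $\mathcal{H}H^i(I,\,F)=0$ for every $F$ in $\mathcal{F}$ and every $i>0$, since any object of $\mathcal{F}$ is canonically an $I$-bimodule via the unit constraints.

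For the second statement I would apply the preceding lemma with $B=I$. If $f:I\to A$ is a morphism of monoids that is an epimorphism in $\mathcal{F}$, then since $I$ is separable, the lemma asserts that $A$ is separable as well. A second invocation of Theorem \ref{teosep} then yields $\mathcal{H}H^i(A,\,M)=0$ for every $A$-bimodule $M$ and every $i>0$.

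The only point that requires a small sanity check is the first one: that $\mu_I$ really is the unit isomorphism and that this isomorphism lives in $I^e$-Mod (so that the splitting required by the definition of separability is available). This is standard coherence and does not involve any calculation beyond invoking the definition of the monoid structure on the tensor unit, so I do not expect a genuine obstacle; the proof reduces to two applications of Theorem \ref{teosep} bracketing one application of the previous lemma.
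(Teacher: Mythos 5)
Your argument is correct and is exactly the intended one: the paper states this corollary without proof, as an immediate consequence of the observation that $I$ is separable (since $\mu_I$ is the unit isomorphism, split in $I^e$-Mod by its inverse), combined with Theorem \ref{teosep} and the preceding lemma on epimorphisms of monoids preserving separability. Nothing is missing.
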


\begin{ejem}
The previous corollary implies that for every biset functor $F$ and every $i>0$, we have $\mathcal{H}H^i(RB,\, F)=0$. Also, if we take $R=k$ as a field of characteristic $0$ and $A=kR_{\rac}$ as the biset functor of rational representations, we know that $kR_{\rac}$ is a monoid and that the linearization morphism $kB\rightarrow kR_{\rac}$ provides an arrow in $\mathcal{F}_{\mathcal{C},\, R}$ as in the previous corollary. Hence, for every $kR_{\rac}$-module $M$ and every $i>0$, we have $\mathcal{H}H^i(kR_{\rac},\, M)=0$.
\end{ejem}

\section{Hochschild cohomology functors} 

Through this section $A$ is a monoid in $\mathcal{F}$,  $M$ and $N$ are $A$-bimodules and $x$, $y$, $z$ and $w$ are objects of $\mathcal{X}$.

Let $P$, $Q$ and $T$ be objects in $\mathcal{F}$. To simplify the notation, in what follows we will work indistinctly with $\mathrm{Hom}_{\mathcal{F}}(P\otimes Q,\, T)$ and  the corresponding set of bilinear maps given by Remark \ref{flechas}. In particular, given an arrow and $f:P\otimes Q\rightarrow T$ in $\mathcal{F}$ we  will avoid the notation $\tilde{f}$, used in Section \ref{subsec-comple}. 

\subsection{$n=0$}

As said in Lemma \ref{deschom}, in this case we clearly have
\begin{displaymath}
\mathcal{H}H^0(A,\, M)\cong CM_A\cong \mathcal{H}_{A^e}(A,\, M).
\end{displaymath}

\subsection{$n=1$}

\begin{defi}
An arrow $f\in \mathrm{Hom}_{\mathcal{F}}(A,\, M_x)$ is called a derivation from $A$ to $M_x$ if for  any $a\in A(y)$ and $b\in A(z)$, we have
\begin{displaymath}
f_{y\diamond z}(a\times b)=(a\times f_z(b))+M(s_{y\diamond x\diamond z}^{y\diamond z\diamond x})(f_y(a)\times b).
\end{displaymath}
The arrow $f$ is called an inner derivation if there exist $m\in M(x)$ such that the morphisms $f_y$ are given by sending $a\in A(y)$ to $a\times m- M(s_{x\diamond y}^{y\diamond x})(m\times a)$.
 \end{defi}

Using expression $\dagger$ from Section \ref{subsec-comple}, we obtain that the kernel of
\begin{displaymath}
\beta_x:\mathrm{Hom}_{\mathcal{F}}(A,\, M_x)\rightarrow \mathrm{Hom}_{\mathcal{F}}(A^{\otimes 2},\, M_x)
\end{displaymath}
corresponds to the derivations from $A$ to $M_x$ and that the image of $\beta_x:M(x)\rightarrow \mathrm{Hom}_{\mathcal{F}}(A,\, M_x)$ corresponds to the inner derivations from $A$ to $M_x$. In particular, we can define two subfunctors of $\mathcal{H}(A,\, M)$ in $\mathcal{F}$. First, that of derivations $\textrm{Der}(A,\, M)$, defined in $x$ as the derivations from  $A$ to $M_x$, and then that of inner derivations $\textrm{Inn}(A,\, M)$, defined accordingly. With this
\begin{displaymath}
\mathcal{H}H^1(A,\, M)=\frac{\textrm{Der}(A,\, M)}{\textrm{Inn}(A,\, M)}.
\end{displaymath}

As in the classical case, we can endow $\mathcal{H}H^1(A,\, A)$ with a Lie-type structure in the following way. First, we denote by $d\mapsto \overline{d}$ the projection map from $\textrm{Der}(A,\, M)(x)$ to $\mathcal{H}H^1(A,\, M)(x)$. Then, taking $\overline{d}\in \mathcal{H}H^1(A,\, A)(x)$ and $\overline{d'}\in \mathcal{H}H^1(A,\, A)(x')$, consider the  arrow $[d,\, d']:A\rightarrow A_{x\diamond x'}$, given  as
\begin{displaymath}
[d,\, d']_y=(d'_{y\diamond x}\circ d_y)-A(s_{y\diamond x'\diamond x}^{y\diamond x\diamond x'})(d_{y\diamond x'}\circ d'_y).
\end{displaymath}
Defining $[\overline{d},\, \overline{d'}]$ as   $\overline{[d, \, d']}\in \mathcal{H}H^1(A,\, A)(x\diamond x')$ is well defined and the  bracket satisfies the corresponding Lie-type axioms.

\begin{rem}
Serge Bouc has suggested that if  $A$ is a Green biset functor, then we can call $\mathcal{H}H^1(A,\, A)$ a \textit{Lie biset functor}.
\end{rem}

\subsection{$n=2$}

\begin{defi}
A square-zero extension of $A$ consists of a monoid $E$ in $\mathcal{F}$ and a monoid homomorphism $\pi:E\rightarrow A$ which is an epimorphism in $\mathcal{F}$ and such that $\textrm{Ker}\pi$ is an ideal of $E$ of \textit{square zero}, that is, the bilinear maps
\begin{displaymath}
\textrm{Ker}\pi(y)\times \textrm{Ker}\pi(z)\rightarrow \textrm{Ker}\pi(y\diamond z)
\end{displaymath}
are zero for all $y$ and $z$.

In this case, it is easy to see that $\textrm{Ker}\pi$ becomes an $A$-bimodule. If $\textrm{Ker}\pi$ is isomorphic to $N$, as $A$-bimodules, we call this extension a square-zero extension of $A$ by $N$.
\end{defi}

\begin{defi}
A square-zero extension of $A$ is called a Hochschild extension if it is a split extension in $\mathcal{F}$, i.e., there exists an arrow $\sigma:A\rightarrow E$ in $\mathcal{F}$ such that $\pi \sigma=A$.
\end{defi}

Two Hochschild extensions, $\pi: E\rightarrow A$ and $\pi':E'\rightarrow A$, of $A$ by $N$ are said to be equivalent if there exists a monoid morphism $\varphi: E\rightarrow E'$ such that the squares in the following diagram
\[
\xymatrix{
0\ar[r]&N\ar[d]^-N\ar[r]&E\ar[d]^-{\varphi}\ar[r]^-{\pi}&A\ar[d]^-A\ar[r]&0\\
0\ar[r]&N\ar[r]&E'\ar[r]^-{\pi'}&A\ar[r]&0
}
\]
commute. This defines an equivalence relation in the collection of all the Hochschild extensions of $A$ by $N$, we denote the collection of equivalence classes by $\mathrm{Ext}(A,\, N)$.

Notice that $\mathrm{Ext}(A,\, N)$ is not empty, since we have the class of the semidirect product $N\rtimes A$. As an object in $\mathcal{F}$, it is given by the coproduct of $N$ with $A$, that is $(N\rtimes A)(y)=N(y)\oplus A(y)$. As a monoid, the product is given by
\begin{displaymath}
((m_1,\, a_1), (m_2,\, a_2))\mapsto((m_1\times a_2)+(a_1\times m_2),\, a_1\times a_2),
\end{displaymath}
for $(m_1,\, a_1)\in N(y)\oplus A(y)$ and $(m_2,\, a_2)\in N(z)\oplus A(z)$.

\begin{teo}
There exists a bijection
\begin{displaymath}
\mathcal{H}H^2(A,\, M)(x)\leftrightarrow \mathrm{Ext}(A,\, M_x).
\end{displaymath} 
\end{teo}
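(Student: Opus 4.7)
The plan is to adapt the classical proof that $H^2$ classifies square-zero extensions, with the twist that at a fixed object $x\in\mathcal{X}$ the coefficients become the $A$-bimodule $M_x$ from Lemma \ref{restric} and Remark \ref{modder}. So I want a bijection between $\mathcal{H}H^2(A,M)(x)$, which by Lemma \ref{deschom} is the second cohomology of the complex $\mathrm{Hom}_{\mathcal{F}}(A^{\otimes\bullet},M_x)$, and the set $\mathrm{Ext}(A,M_x)$.

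First I would fix notation: by the explicit formula for $\beta$ at the end of Section \ref{subsec-comple}, a representative of a class in $\mathcal{H}H^2(A,M)(x)$ is a natural family of bilinear maps $f_{y,z}:A(y)\times A(z)\to M(y\diamond z\diamond x)$ satisfying
\[
a\times f_{z,w}(b,c)-f_{y\diamond z,w}(a\times b,c)+f_{y,z\diamond w}(a,b\times c)=M(s_{y\diamond z\diamond x\diamond w}^{y\diamond z\diamond w\diamond x})(f_{y,z}(a,b)\times c)
\]
for $a\in A(y)$, $b\in A(z)$, $c\in A(w)$; two such families are cohomologous iff their difference equals $\beta(g)$ for some $g\in\mathrm{Hom}_{\mathcal{F}}(A,M_x)$.

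From a cocycle to an extension: given such an $f$, I would construct $E_f$ with underlying object $M_x\oplus A\in\mathcal{F}$, unit $(0,\varepsilon_A)$, projection $\pi_f(m,a)=a$, section $\sigma_f(a)=(0,a)$, and product at positions $y,z$ given by
\[
(m_1,a_1)\times(m_2,a_2):=\bigl(M(s_{y\diamond x\diamond z}^{y\diamond z\diamond x})(m_1\times a_2)+a_1\times m_2+f_{y,z}(a_1,a_2),\;a_1\times a_2\bigr),
\]
modelled on the semidirect product of Section \ref{subsec-comple} and twisted by $f$. The bimodule axioms for $M_x$ absorb all associativity terms involving at most one $f$, and the remaining $f$-terms balance out exactly by the 2-cocycle identity. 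Then $E_f$ is a monoid in $\mathcal{F}$ with $\ker\pi_f\cong M_x$ as an $A$-bimodule, hence a Hochschild extension. Conversely, given $(E,\pi,\sigma)\in\mathrm{Ext}(A,M_x)$ and identifying $\ker\pi$ with $M_x$,
\[
f_\sigma(a,b):=\sigma_{y\diamond z}(a\times b)-\sigma_y(a)\times_E\sigma_z(b)
\]
lies in $\ker\pi$ because $\pi$ is a monoid morphism, and the associativity of $E$ yields the cocycle identity for $f_\sigma$.

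To pass to classes and show the constructions are mutually inverse, I would verify: first, if $f'=f+\beta(g)$ (normalized so that $g(\varepsilon_A)=0$, which is always achievable up to cohomology), then $\varphi:E_f\to E_{f'}$, $(m,a)\mapsto(m+g(a),a)$, is an equivalence of Hochschild extensions, the multiplicativity of $\varphi$ being exactly the defining formula of $\beta(g)$; second, replacing the section $\sigma$ by $\sigma'=\sigma+h$ with $h:A\to\ker\pi$ changes $f_\sigma$ by $\beta(h)$, and equivalent extensions admit compatible sections, so $[f_\sigma]$ depends only on the equivalence class of $(E,\pi)$; third, $f_{\sigma_f}=f$ and any $(E,\pi,\sigma)$ is equivalent to $E_{f_\sigma}$ via $(m,a)\mapsto m+\sigma_y(a)$. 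The main obstacle will be keeping the symmetries $s_{\cdot}^{\cdot}$ from Remark \ref{modder} aligned throughout the associativity check for the product on $E_f$ and the corresponding cocycle computation from the associativity of $E$; once the conventions are pinned down, the calculations are lengthy but mechanical, following the classical Hochschild pattern.
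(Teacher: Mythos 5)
Your proposal follows essentially the same route as the paper: the same $f$-twisted product on $M_x\oplus A$ with the symmetry $s_{y\diamond x\diamond z}^{y\diamond z\diamond x}$, the same passage from coboundaries to equivalences via $(m,a)\mapsto(m+g_y(a),a)$, and the same extraction of a cocycle from the splitting of a Hochschild extension. The only small discrepancy is your claim that the unit of $E_f$ is $(0,\varepsilon_A)$: for a general (non-normalized) cocycle the unit is $(-f_{\uno,\,\uno}(\varepsilon,\varepsilon),\,\varepsilon)$, as the paper records, though this is harmless since every class admits a normalized representative.
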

\begin{proof}
By the cocycle condition of Section \ref{subsec-comple}, we have that the kernel of
\begin{displaymath}
\beta_x:\mathrm{Hom}_{\mathcal{F}}(A^{\otimes 2},\, M_x)\rightarrow \mathrm{Hom}_{\mathcal{F}}(A^{\otimes 3},\, M_x)
\end{displaymath}
consists of arrows $f\in \mathrm{Hom}_{\mathcal{F}}(A^{\otimes 2},\, M_x)$ that satisfy that for any $(a_1,\, a_2,\, a_3)$ in $A(y)\times A(z)\times A(w)$, the expression
\begin{displaymath}
a_1\times f_{z,\, w}(a_2,\, a_3)-f_{y\diamond z,\, w}(a_1\times a_2,\, a_3)+ f_{y,\, z \diamond w}(a_1,\, a_2\times a_3)-M(y\diamond z\diamond s_{x \diamond w}^{w\diamond x})(f_{y,\, z}(a_1,\, a_2)\times a_3)
\end{displaymath}
is equal to zero.

For such an $f$, we consider $E_f=M_x\oplus A$ in $\mathcal{F}$ with the following product
\begin{displaymath}
((m_1,\, a_1),(m_2, a_2))\mapsto (M(s_{y\diamond x\diamond z}^{y\diamond z\diamond x})(m_1\times a_2)+(a_1\times m_2)+f_{y,\, z}(a_1,\, a_2),\, a_1\times a_2),
\end{displaymath}
for $(m_1,\, a_1)\in M_x(y)\oplus A(y)$ and $(m_2,\, a_2)\in M_x(z)\oplus A(z)$. The equality to zero of the expression above allows us to show that this product is associative. Also, it shows that if we take $m_{\varepsilon}=-f_{\uno,\, \uno}(\varepsilon,\, \varepsilon)$, then $(m_{\varepsilon},\, \varepsilon)\in M_x(\uno)\oplus A(\uno)$ is the identity element for the product. Finally, it is easy to verify that this product is functorial. With this, it is immediate to see that the extension
\[
\xymatrix{
0\ar[r]&M_x\ar[r]^-{\iota}&E_f\ar[r]^-{\varrho}&A\ar[r]&0,
}
\]
where $\iota$ is the inclusion morphism and $\varrho$ is the projection morphism, is a Hochschild extension. Now consider $f'=f-\beta_x(g)$, where $g\in \mathrm{Hom}_{\mathcal{F}}(A, M_x)$ and 
\begin{displaymath}
\beta_x: \mathrm{Hom}_{\mathcal{F}}(A,\, M_x)\rightarrow \mathrm{Hom}_{\mathcal{F}}(A^{\otimes 2},\, M_x).
\end{displaymath}
Then, the extensions given by $E_f$ and $E_{f'}$ are equivalent. Indeed, the isomorphism of monoids $\varphi:E_f\rightarrow E_{f'}$ is given by sending $(m,\, a)\in M_x(y)\oplus A(y)$ to $(m+g_y(a),\, a)\in M_x(y)\oplus A(y)$.

On the other direction, given a Hochschild extension
\[
\xymatrix{
0\ar[r]&M_x\ar[r]^-{\gamma}&E\ar[r]^-{\pi}&A\ar[r]&0,
}
\]
since there exists $\sigma:A\rightarrow E$ in $\mathcal{F}$ such that $\pi\sigma=A$, then $E\cong M_x\oplus A$ in $\mathcal{F}$. Hence, we can obtain the monoid structure of $E$ from this direct sum,
\begin{displaymath}
(\gamma_y(m_1),\, \sigma_y(a_1))\times (\gamma_z(m_2),\, \sigma_z(a_2)),
\end{displaymath}
for $(m_1,\, a_1)\in (M_x\oplus A)(y)$ and $(m_2,\, a_2)\in (M_x\oplus A)(z)$. To simplify the notation, let $n_1=\gamma_y(m_1)$, $b_1=\sigma_y(a_1)$, $n_2=\gamma_z(m_2)$ and  $b_2=\sigma_z(a_2)$. Then, since the product is bilinear and $M_x$ is of square zero, it is easy to see that the product above is equal to
\begin{displaymath}
\left(\textrm{Im}\gamma(s_{y\diamond x\diamond z}^{y\diamond z\diamond x})(n_1\times b_2)+(b_1\times n_2)+f^0_{y,\, z}(b_1,\, b_2), b_1\times b_2 \right),
\end{displaymath}
where $f^0_{y\diamond z}(b_1,\, b_2)$ is the unique element in $\textrm{Im}\gamma(y\diamond z)$  such that $(0,\, b_1)\times (0,\, b_2)=(f^0_{y\diamond z}(b_1,\, b_2),\, b_1\times b_2)$. This allows us to define a map $f^E_{y,\, z}: A(y)\times A(z)\rightarrow M_x(y\diamond z)$ sending $(a_1,\, a_2)$ to $\gamma_{y\diamond z}^{-1}f^0_{y\diamond z}(\sigma_y(a_1),\, \sigma_z(a_2))$. In turn, this defines an arrow  $f^E\in\mathrm{Hom}_{\mathcal{F}}(A^{\otimes 2},\, M_x)$ and the associativity of the product of $E$ implies that $f^E$ is in the kernel of $\beta_x$. Finally, suppose we have an equivalent extension
\[
\xymatrix{
0\ar[r]&M_x\ar[r]^-{\gamma'}&E'\ar[r]^-{\pi'}&A\ar[r]&0,
}
\]
with splitting morphism $\sigma':A\rightarrow E'$. Let $\varphi:E\rightarrow E'$ be the equivalence morphism. Then, as before, for $a_1\in A(y)$ there exist a unique $f^0_y(\sigma_y(a_1))\in \mathrm{Im}\gamma'(y)$ that corresponds to $\varphi_y(0,\, \sigma_y(a_1))\in E'(y)$. Hence we can define $g_y:A(y)\rightarrow M_x(y)$ by sending $a_1$ to $(\gamma')^{-1}_yf^0_y(\sigma_y(a_1))$. This defines an arrow in $\mathrm{Hom}_{\mathcal{F}}(A,\,M_x)$. Also, using the fact that $\varphi$ is a morphism of monoids, if we take the morphism defined before $f^{E'} \in\mathrm{Hom}_{\mathcal{F}}(A^{\otimes 2},\, M_x)$ corresponding to $E'$, then we obtain $f^E-f^{E'}=\beta_x(g)$. 
\end{proof}

\begin{coro}
$\mathrm{Ext}(A,\, M_{\,\_\,})$ is a functor in $\mathcal{F}$.  
\end{coro}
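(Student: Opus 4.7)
The plan is to transport the functor structure from $\mathcal{H}H^2(A,\, M)$ along the bijections supplied by the previous theorem. Since $\mathcal{H}H^2(A,\, M)$ already lies in $\mathcal{F}$ by Definition \ref{defhoch}, and the theorem produces, for each $x\in \mathcal{X}$, a bijection
\begin{displaymath}
\Phi_x:\mathcal{H}H^2(A,\, M)(x)\longrightarrow \mathrm{Ext}(A,\, M_x),
\end{displaymath}
it suffices to promote this family to a natural isomorphism of functors $\mathcal{X}\rightarrow R\textrm{-Mod}$.

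First I would spell out the action of an arrow $\varphi:x\rightarrow y$ in $\mathcal{X}$ on $\mathrm{Ext}(A,\, M_x)$. By Lemma \ref{restric} together with Remark \ref{bimodop}, the functor $M$ yields a morphism of $A$-bimodules $M_{\varphi}:M_x\rightarrow M_y$. Given a Hochschild extension $0\rightarrow M_x\rightarrow E\rightarrow A\rightarrow 0$ of $A$ by $M_x$, define its image under $\varphi$ to be the class of the pushout of $E$ along $M_{\varphi}$, equipped with the induced monoid structure. Equivalently, if $f^E\in \mathrm{Hom}_{\mathcal{F}}(A^{\otimes 2},\, M_x)$ is the cocycle attached to $E$ in the proof of the theorem, one transports it to $M_{\varphi}\circ f^E\in \mathrm{Hom}_{\mathcal{F}}(A^{\otimes 2},\, M_y)$. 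Well-definedness on equivalence classes is immediate: two equivalent extensions differ by a coboundary $\beta_x(g)$, and post-composition with $M_{\varphi}$ sends it to $\beta_y(M_{\varphi}\circ g)$, so post-composition descends to $\mathrm{Ext}$.

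Next I would verify that the same $M_{\varphi}$ computes the action of $\varphi$ on $\mathcal{H}H^2(A,\, M)(x)$. Post-composition with $M_{\varphi}$ defines maps $\mathrm{Hom}_{\mathcal{F}}(A^{\otimes n},\, M_x)\rightarrow \mathrm{Hom}_{\mathcal{F}}(A^{\otimes n},\, M_y)$ which commute with the coboundary $\beta$ described in Section \ref{subsec-comple}, because $\beta$ is defined through the bimodule actions and the symmetries of $\mathcal{X}$, both of which $M_{\varphi}$ respects. Hence this is exactly the arrow $\mathcal{H}H^2(A,\, M)(\varphi)$ coming from the internal hom description of the Hochschild complex.

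The main obstacle is the commutativity of the naturality square
\begin{displaymath}
\xymatrix{
\mathcal{H}H^2(A,\, M)(x)\ar[r]^-{\Phi_x}\ar[d]_-{\mathcal{H}H^2(A,\, M)(\varphi)}&\mathrm{Ext}(A,\, M_x)\ar[d]^-{M_{\varphi}\cdot}\\
\mathcal{H}H^2(A,\, M)(y)\ar[r]_-{\Phi_y}&\mathrm{Ext}(A,\, M_y),
}
\end{displaymath}
which reduces to comparing two explicit descriptions. Starting from a cocycle $f:A^{\otimes 2}\rightarrow M_x$, the extension $\Phi_x([f])=E_f=M_x\oplus A$ has product twisted by $f$ in the $M_x$-component, while $\Phi_y([M_{\varphi}\circ f])=E_{M_{\varphi}\circ f}=M_y\oplus A$ has product twisted by $M_{\varphi}\circ f$; applying $M_{\varphi}$ to the $M_x$-component of $E_f$ and leaving $A$ untouched is a monoid morphism giving exactly $E_{M_{\varphi}\circ f}$, and this map identifies the two extensions up to the required equivalence. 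Once this is checked, functoriality in $\varphi$ and compatibility with the $R$-module structure are transported from $\mathcal{H}H^2(A,\, M)$, and the corollary follows.
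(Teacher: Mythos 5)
Your proposal is correct and follows essentially the same route as the paper: both transport the $R$-module and functorial structure along the bijection of the theorem, with an arrow $\varphi:x\rightarrow y$ acting on the cocycle side by postcomposition with $M_{\varphi}$, i.e.\ sending the class of $E_f$ to that of $E_{M_{\varphi}\circ f}$. The only difference is that you additionally verify that this transported action agrees with the intrinsic pushout of extensions along $M_{\varphi}$, a check the paper omits since it simply \emph{defines} the functor structure by the bijection.
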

\begin{proof}
Indeed, since $\mathcal{H}H^2(A,\, M)(x)$ is an $R$-module, the previous bijection endows $\mathrm{Ext}(A,\, M_x)$ with a structure of $R$-module. The functorial structure is also given by the  bijection. That is, given an arrow  $\alpha:x\rightarrow y$ in $\mathcal{X}$, then the map
\begin{displaymath}
\mathrm{Ext}(A,\, M_x)\rightarrow \mathrm{Ext}(A,\, M_y)
\end{displaymath}
sends (the class of) the extension $E_f$, corresponding to the cocycle $f$, to $E_{M_{\alpha}\circ f}$.
\end{proof}

\begin{rem}
It is not hard to see that the sum we obtain in $\mathrm{Ext}(A,\, M_x)$, by the previous corollary, corresponds to the Baer sum of extensions. Given two Hochschild extensions, $\pi:E\rightarrow A$ and $\pi':E'\rightarrow A$,  of $A$ by $N$, their Baer sum is defined in $y$ as the quotient $\Pi (y)/\Gamma (y)$, where $\Pi (y)$ is the pullback of $\pi_y$ and $\pi'_y$, that is
\begin{displaymath}
\Pi (y)=\{(a,\, b)\in E(y)\oplus E'(y)\mid \pi_y(a)=\pi'_y(b)\},
\end{displaymath}
and  $\Gamma (y)=\{(\gamma_y(n),\,-\gamma'_y(n))\mid n\in N(y)\}$, if $\gamma:N\rightarrow E$ and $\gamma':N\rightarrow E'$ are the corresponding morphisms. It is defined in an obvious way in arrows. This construction yields a Hochschild extension of $A$ by $N$.
\end{rem}

\subsection{The Hochschild cohomology monoid}

\begin{defi}
An $\mathbb{N}$\textit{-graded monoid} in $\mathcal{F}$ is a  lax  monoidal functor $G:\mathbb{N}\rightarrow \mathcal{F}$, where $\mathbb{N}$ is seen as a discrete monoidal category with monoidal structure given by the addition in $\mathbb{N}$. 
\end{defi}

That is, an $\mathbb{N}$-graded monoid consists of the following:
\begin{itemize}
\item For each $i\in \mathbb{N}$, a functor $F_i$ in $\mathcal{F}$.
\item For every $i,\, j\in \mathbb{N}$ and every $x$ and $y$ objects in $\mathcal{X}$, a bilinear map
\begin{displaymath} 
F_i(x)\times F_j(y)\rightarrow F_{i+j}(x\diamond y),
\end{displaymath}
that is associative in an obvious way and functorial in $x$ and $y$ (analogous conditions to those appearing in Section \ref{homfun} for a monoid).
\item An element $\varepsilon_{F_0}\in F_0(\uno)$ such that  for every $i\in \mathbb{N}$ and $a\in F_i(x)$,
\begin{displaymath}
a=F_i(\gamma_x)(\varepsilon_{F_0}\times a)=F_i(\rho_x)(a\times \varepsilon_{F_0}).
\end{displaymath}
\end{itemize}

In this case, $F_0$ is a monoid in $\mathcal{F}$ and every $F_i$ is an $F_0$-bimodule.

Since $\mathcal{F}$ has arbitrary coproducts, we can consider  
\begin{displaymath}
\mathcal{H}H(A):=\bigoplus_{i\in \mathbb{N}}\mathcal{H}H^i(A,\, A)
\end{displaymath}
in $\mathcal{F}$. We will endow $\mathcal{H}H(A)$ with a structure of graded monoid. We begin by defining, for $i,\, j\in \mathbb{N}$, a cup product,
\begin{displaymath}
\_\,\smile\,\_:\mathcal{H}(A^{\otimes i},\, A)(x)\times \mathcal{H}(A^{\otimes j},\, A)(y)\rightarrow \mathcal{H}(A^{\otimes i+j},\, A)(x\diamond y),
\end{displaymath}
sending $(f^i,\, f^j)$, with $f^i\in \textrm{Hom}_{\mathcal{F}}(A^{\otimes i},\, A_x)$ and $f^j\in \textrm{Hom}_{\mathcal{F}}(A^{\otimes j},\, A_y)$, to 
\begin{displaymath}
f^i\smile f^j: A(x_1)\times\cdots\times A(x_i)\times A(y_1)\times\cdots\times A(y_j)\rightarrow A_{x\diamond y}(x_1\diamond\cdots\diamond x_i\diamond y_1\diamond\cdots \diamond y_j),
\end{displaymath}
which sends an $(i+j)$-tuple $(a_1,\ldots ,a_i,\, b_1,\ldots , b_j)$ to 
\begin{displaymath}
A\left(s_{(i)\diamond x\diamond (j)\diamond y}^{(i+j)\diamond x\diamond y}\right)\left( f^i_{x_1,\ldots ,x_i}(a_1,\ldots ,a_i)\times f^j_{y_1,\ldots ,y_j}(b_1,\ldots ,b_j)\right),
\end{displaymath}
here $s_{(i)\diamond x\diamond (j)\diamond y}^{(i+j)\diamond x\diamond y}$ is the symmetry in $\mathcal{X}$ sending $x_1\diamond\cdots\diamond x_i\diamond x\diamond y_1\diamond\cdots \diamond y_j\diamond y$ to $x_1\diamond\cdots\diamond x_i\diamond  y_1\diamond\cdots \diamond y_j\diamond x\diamond y$.

This product makes $\bigoplus_{i=0}^n \mathcal{H}(A^{\otimes i},\, A)$ an  $\mathbb{N}$-graded monoid in $\mathcal{F}$, with identity $\varepsilon_A\in A(\uno)\cong \mathcal{H}(A^{\otimes 0},\, A)(\uno)$. Next, consider the complex $\mathcal{H}(A^{\otimes *},\, A)$ with morphisms $\beta$ as in Section 4.2. A series of straightforward computations (in particular one must pay attention to the symmetries that appear in the expressions) show that for any $i,\, j\in \mathbb{N}$ and $f^i$ and $f^j$ as before,
\begin{displaymath}
\beta (f^i\smile f^j)=\beta f^i\smile f^j+(-1)^if^i\smile \beta f^j.
\end{displaymath}
This formula allows us to extend the cup product to $\bigoplus_{i\in \mathbb{N}}\mathcal{H}H^i(A,\, A)$, just as in the classical case. Also, it is easy to see that the product satisfies the conditions for a graded monoid, with  $\varepsilon_A\in CA(\uno)\cong \mathcal{H}H^0(A,\, A)(\uno)$.

\begin{rem}
If $A$ is a Green biset functor, then we can call $\mathcal{H}H(A)$ a \textit{graded Green biset functor}.
\end{rem}

\begin{rem}
The cup product just described works exactly the same if we consider first
\begin{displaymath}
\_\,\smile\,\_: A(x)\times \mathcal{H}(A^{\otimes j},\, M)(y)\rightarrow \mathcal{H}(A^{\otimes j},\, M)(x\diamond y), 
\end{displaymath}
via the isomorphism $A(x)\cong \mathcal{H}(A^{\otimes 0},\, A)(x)$ and then
\begin{displaymath}
\_\,\smile\,\_: CA(x)\times \mathcal{H}H^j(A,\, M)(y)\rightarrow \mathcal{H}H^j(A,\, M)(x\diamond y). 
\end{displaymath}
This shows that each $\mathcal{H}H^j(A,\, M)$ is a $CA$-module.
\end{rem}

\subsection{Further results}

The author believes that the following results, not considered in this paper, can also be extended to our context.

\begin{itemize}
\item[i)] The description of $\mathcal{H}H^3(A,\, M)$ in terms of \textit{crossed bimodules} (see E.1.5.1 in \cite{loday}).
\item[ii)] The definition of a bracket giving $\mathcal{H}H(A)$ a structure of \textit{graded Lie monoid}, with the grading shifted by -1, as in the classical case. 
\end{itemize}

\section*{Acknowledgments}

The contents of this article were developed during a sabbatical year the author did at the laboratory LAMFA of the Universit\'e de Picardie, in Amiens, France, from August 2022 to July 2023. The author thanks the staff and colleagues at LAMFA for all the support she received during her stay,  which led to the successful development of the project. Special thanks to Serge Bouc, host during the sabbatical, for all the ideas, suggestions and stimulating conversations.  


\begin{thebibliography}{10}

\bibitem{italianos}
A.~Ardizzoni, C.~Menini, and D.~\c{S}tefan.
\newblock Hochschild cohomology and ``smoothness'' in monoidal categories.
\newblock {\em Journal of pure and applied algebra}, 208:297--330, 2007.

\bibitem{borc}
Francis Borceux.
\newblock {\em Handbook of categorical algebra}.
\newblock Volumes I and II, Cambridge University Press, Cambridge, 1994.

\bibitem{resmac}
Serge Bouc.
\newblock R\'{e}solutions de foncteurs de {M}ackey.
\newblock In {\em Group representations: cohomology, group actions and topology
  ({S}eattle, {WA}, 1996)}, volume~63 of {\em Proc. Sympos. Pure Math.}, pages
  31--83. Amer. Math. Soc., Providence, RI, 1998.

\bibitem{biset}
Serge Bouc.
\newblock {\em Biset functors for finite groups}.
\newblock Springer, Berlin, 2010.

\bibitem{centros}
Serge Bouc and Nadia Romero.
\newblock The center of a {G}reen biset functor.
\newblock {\em Pacific J. Math.}, 303(2):459--490, 2019.

\bibitem{gers}
Murray Gerstenhaber.
\newblock The cohomology structure of an associative ring.
\newblock {\em Annals of mathematics}, 78:267--288, 1963.

\bibitem{chico1}
Magnus Hellstr{\o}m-Finnsen.
\newblock Hochschild cohomology of ring objets in monoidal categories.
\newblock {\em Communications in algebra}, 46:5202--5233, 2018.

\bibitem{libro}
M.~A. Hill, M.~J. Hopkins, and D.~C. Ravenel.
\newblock {\em Equivariant stable homotopy theory and the {K}ervaire invariant
  problem}, volume~40 of {\em New Mathematical Monographs}.
\newblock Cambridge University Press, Cambridge, 2021.

\bibitem{loday}
Jean-Louis Loday.
\newblock {\em Cyclic homology}.
\newblock Springer-Verlag, Berlin, 2nd edition, 1998.

\bibitem{maclane}
Saunders Mac~Lane.
\newblock {\em Categories for the working mathematician}.
\newblock Springer, Berlin, 1971.

\end{thebibliography}

\end{document}